\newtheorem{Thm}{Theorem}[section]
\newtheorem{corollary}[Thm]{Corollary}
\newtheorem{lemma}[Thm]{Lemma}
\newtheorem{proposition}[Thm]{Proposition}
\newtheorem{definition}[Thm]{Definition}
\newtheorem{example}[Thm]{Example}
\newtheorem{theorem}[Thm]{Theorem}
\newcommand{\bitem}{\begin{itemize}}
\newcommand{\eitem}{\end{itemize}}
\newcommand{\benum}{\begin{enumerate}}
\newcommand{\eenum}{\end{enumerate}}
\newcommand{\beq}{\begin{equation}}
\newcommand{\eeq}{\end{equation}}
\newcommand{\ip}[2]{\langle#1,#2\rangle}
\newcommand{\absip}[2]{| \langle#1,#2\rangle |}
\newcommand{\norm}[1]{\|#1\|}
\newcommand{\spann}{\mbox{\rm span}}
\newcommand{\Id}{\mbox{\rm Id}}
\newcommand{\tr}{\mbox{\rm tr}}
\def\RR{\mathbb{R}}
\def\FF{\mathbb{F}}
\def\cA{{\mathcal{A}}}
\def\cR{{\mathcal{R}}}
\def\cH{{\mathcal{H}}}
\def\SSn{\mathbb S}
\def\cHn{\mathcal H}
\def\cH{\mathcal{H}}
\newcommand{\gk}[1]{{\color{black}{#1}}}
\newcommand{\gkk}[1]{{\color{black}{#1}}}
\newcommand{\bgb}[1]{{\color{black}{#1}}}
\begin{document}

\title{A quantitative notion of redundancy for finite frames}

\author[B. G. Bodmann]{Bernhard G. Bodmann}
\address{Department of Mathematics, University of Houston, Houston, TX 77204-3008, USA}
\email{bgb@math.uh.edu}

\author[P. G. Casazza]{Peter G. Casazza}
\address{Department of Mathematics, University of Missouri, Columbia, MO 65211-4100, USA}
\email{pete@math.missouri.edu}

\author[G. Kutyniok]{Gitta Kutyniok}
\address{Institute of Mathematics, University of Osnabr\"uck, 49069 Osnabr\"uck, Germany}
\email{kutyniok@math.uni-osnabrueck.de}

\thanks{The first author was supported by NSF DMS 0807399, the second author by NSF DMS 0704216.
The third author would like to thank the Departments of Mathematics at the University of Missouri
and at the University of Houston for their hospitality and support during her visits, which enabled
completion of this work.}

\begin{abstract}
The objective of this paper is to improve the customary definition
of redundancy by providing quantitative measures in its place, which
we coin {\em upper and lower redundancies}, that match better with
an intuitive understanding of redundancy for finite frames \gk{ in a Hilbert
space}.
This motivates a carefully chosen list of desired properties for
upper and lower redundancies. The means to achieve these properties
is to consider the maximum and minimum of a redundancy function,
which is interesting in itself. The redundancy function is defined
on the sphere of the Hilbert space and measures the concentration of
frame vectors around each point. A complete characterization of
functions on the sphere which coincide with a redundancy function
for some frame is given. The upper and lower redundancies obtained
from this function are shown to satisfy all of the intuitively
desirable properties. In addition, the range of values they assume
is characterized.
\end{abstract}

\keywords{Frames, Erasures, Linearly Independent Sets, Noise, Redundancy, Redundancy
Function, Spanning Sets, Sparse Approximation.}

\subjclass{Primary: 94A12; Secondary: 42C15, 15A04, 68P30}

\maketitle

\section{Introduction}

The theory of frames is nowadays a very well established field in which redundancy appears both
as a mathematical concept and as a methodology for signal processing. Frames ensure, for
instance, resilience against noise, quantization errors and
erasures in signal transmissions \cite{KC08}.
Recently, the ability of redundant systems to provide sparse representations has been
extensively exploited \cite{BDE09}. Hence, it is fair to say that frames -- or redundant systems --
have become a standard notion in applied mathematics, computer science, and engineering.

Therefore one would expect that the `redundancy' of a frame, at least in finite dimensions, is a
well-explored concept.
However, to the best of the authors' knowledge, there does not exist a precise quantitative notion of redundancy other than the number
of frame vectors per dimension. From a scholarly point of view, this
is a rather unsatisfactory definition. It does not distinguish between the two toy examples of  frames
$\{e_1,e_1,e_1,e_2\}$ and $\{e_1,e_1,e_2,e_2\}$ in $\RR^2$ ($e_1$ and $e_2$ being the canonical orthonormal basis vectors), as one might wish,
nor does it provide much insight into properties of
the frame. Its main advantage is that for unit-norm tight frames it equals the value of
the frame bound.
Hence, although the idea of redundancy is the crucial property in \gk{various}
applications and thus the foundation of frame theory, a mathematically precise, meaningful definition is
missing.

\medskip

In this paper, we take a systematic approach to the problem of introducing a quantitative
notion of redundancy for finite frames \gk{in a Hilbert space $\cH$, say}, by first establishing a list
of desiderata that such a quantity should satisfy. We then propose a definition of a redundancy
function on the unit sphere \gk{in $\cH$,} which is shown to satisfy all
the postulated conditions, thereby immediately supplying us with a detailed list of
properties of a frame that its redundancy reveals.

\subsection{Review of Finite Frames}

We start by fixing our terminology while briefly reviewing the basic definitions
related to frames. \gk{Let} $\cHn$ denote an $n$-dimensional
real or complex Hilbert space. In this finite-dimensional situation, $\Phi = (\varphi_i)_{i=1}^N$
is called a {\em frame} for $\cHn$, if it is a -- typically, but not necessarily linearly dependent -- spanning
set. This definition is equivalent to ask for the existence of constants $0 < A \le B < \infty$
such that
\[
A\norm{x}^2 \leq \sum_{i=1}^N |\langle x, \varphi_i \rangle |^2 \leq B\norm{x}^2
\quad \mbox{for all } x \in \cHn.
\]
When $A$ is chosen as the largest possible value and $B$ as the smallest
for these inequalities to hold,
then we call them the {\em (optimal) frame constants}.
If $A$ and $B$ can be chosen as $A=B$, then the frame is called {\em $A$-tight}, and if
$A=B=1$ is possible, $\Phi$ is a {\em Parseval frame}. A frame is called
{\em equal-norm}, if there exists some $c>0$ such that $\|\varphi_i\|=c$ for all
$i=1,\ldots,N$, and it is {\em unit-norm} if $c=1$.

Apart from providing redundant expansions, frames can also serve as an analysis tool.
In fact, they allow the analysis of data by studying the associated {\em frame coefficients}
$(\langle x, \varphi_i \rangle)_{i=1}^N$, where the operator $T_\Phi$
defined by
$T_\Phi: \cHn \to \ell_2(\{1, 2, \dots, N\})$, $x \mapsto (\langle x,\varphi_i\rangle)_{i=1}^N$
is called the \emph{analysis operator}. The adjoint $T^*_\Phi$ of the analysis operator is typically
referred to as the {\em synthesis operator} and satisfies $T^*_\Phi((c_i)_{i=1}^N) = \sum_{i=1}^N c_i\varphi_i$.
The main operator associated with a frame, which provides a stable reconstruction process, is the
{\em frame operator}
\[
S_\Phi=T^*_\Phi T_\Phi : \cHn \to \cHn, \quad x \mapsto \sum_{i=1}^N \langle x,\varphi_i\rangle \varphi_i,
\]
a positive, self-adjoint invertible operator on $\cHn$. In the case of a Parseval frame, we have $S_\Phi=\Id_{\cHn}$.
In general, $S_\Phi$ allows reconstruction of a signal $x \in \cHn$ through the
reconstruction formula
\beq \label{eq:expansion}
x = \sum_{i=1}^N \langle x,S_\Phi^{-1} \varphi_i\rangle \varphi_i.
\eeq
The sequence $(S_\Phi^{-1} \varphi_i)_{i=1}^N$ which can be shown to form a frame itself, is often
referred to as the {\em canonical dual frame}.

We note that the choice of coefficients in the expansion \eqref{eq:expansion}
is generally not the only possible one. If the frame is linearly dependent --
which is typical in applications --
then there exist infinitely many choices of coefficients $(c_i)_{i=1}^N$ leading to expansions of
$x \in \cH$ by
\beq \label{eq:sparseexpansion}
x = \sum_{i=1}^N c_i \varphi_i.
\eeq
This fact, for instance, ensures resilience to erasures and noise. The particular choice of coefficients
displayed in \eqref{eq:expansion} is the smallest in $\ell_2$ norm \cite{Chr03}, hence contains the least energy.
\gkk{A different paradigm has recently received rapidly increasing attention, namely to choose
the coefficient sequence to be sparse in the sense of having only few non-zero entries, thereby allowing
data compression while preserving perfect recoverability (see, e.g., \cite{BDE09} and references
therein).}


For a more extensive introduction to frame theory, we refer the interested reader to the books
\cite{Dau92,Mal98,Chr03} as well as to the survey papers \cite{KC07a,KC07b}.

\subsection{\gk{Problems with the Customary Notion of Redundancy}}
\label{subsec:vagueidea}

The previous subsection illustrated the fact that frame theory is entirely based on
the notion of redundancy. So far, the redundancy of a frame $\Phi = (\varphi_i)_{i=1}^N$
for an $n$-dimensional Hilbert space $\mathcal H$
was generally understood as
the quotient $\frac{N}{n}$, a customary, but somewhat crude measure as we will illustrate below.
If the finite frame is unit-norm and tight, then this
quotient coincides precisely with the frame bound, which in this case might
indeed serve as a redundancy measure. But let us consider the two frames $\Phi_{1,s}$
and $\Phi_2$ for $\cHn$ defined by
\beq \label{eq:example1}
\Phi_{1,s} = \{e_1, \ldots, e_1, e_2, e_3, \ldots, e_n\}, \quad \mbox{where $e_1$ occurs $s$ times},
\eeq
and
\beq \label{eq:example2}
\Phi_2 = \{e_1, e_1, e_2, e_2, e_3, e_3, \ldots, e_n, e_n\},
\eeq
with $\{e_1, \ldots, e_n\}$ being an orthonormal basis for $\cHn$. If $s=n+1$, then
the crude, customary measure of redundancy  coincides for $\Phi_{1,s}$ and $\Phi_2$. However, intuitively
the redundancy of $\Phi_{1,s}$ seems to be very localized, whereas the redundancy of
$\Phi_2$ seems to be quite uniform. The fact that $\Phi_2$ can be split into two spanning
sets, but $\Phi_{1,s}$ cannot, gives further support to this intuition. Also, the frame $\Phi_2$ is
robust with respect to any one erasure, whereas $\Phi_{1,s}$ does not have this property.
Neither of these facts can be read from the customary redundancy measure, which makes it rather
unsatisfactory.

Concerning infinite-dimensional Hilbert spaces, research has already progressed, and we refer to the recent
publication \cite{BL07} (see also \cite{BCL09}). In this paper, the authors provide
a meaningful quantitative notion of redundancy which applies to general
infinite frames. In their work, redundancy
is defined as the reciprocal of a so-called frame measure function, which is a
function of certain averages of inner products of frame elements with their
corresponding dual frame elements.  More recently, in \cite{BCL09}, it is shown that
$\ell_1$-localized frames satisfy
several properties intuitively linked to redundancy such as that any frame with
redundancy \gk{greater} than one should contain in it a frame with redundancy arbitrarily
close to one, the redundancy of any frame for the whole space should be greater than
or equal to one, and that the redundancy of a Riesz basis should be exactly one,
were proven for this notion. However, as stated, these notions of redundancy only apply to infinite
frames.

\subsection{\gk{An Intuition-Driven Approach to Redundancy}}
\label{subsec:intuition}

Concluding from the previous subsection, there does not exist a satisfactory notion
of redundancy for finite frames, which forces us to first build up intuition on
what we expect of such a notion. In order to properly define a quantitative notion
of redundancy, we will agree on a list of desiderata that our notion is required
to satisfy.

Inspired by the two examples \eqref{eq:example1} and \eqref{eq:example2}, which
our notion of redundancy shall certainly distinguish, we realize that the local concentration
of frame vectors should play an essential role. Hence a local
notion of redundancy is desirable. Based on this, suitable global redundancy measures
should be the minimal and maximal possible local redundancy attained. This philosophy
also coincides with the philosophy of density considerations upon which the notion of redundancy
for infinite frames is built \cite{BL07,BCL09}, since lower and upper densities have
been studied multiple times giving suitable measures for local concentrations
(see \cite{Hei07}). Coming back to \eqref{eq:example1} and \eqref{eq:example2},
ideally, the upper redundancy of \eqref{eq:example1} should be $s$ and the lower
$1$, whereas the upper and lower redundancies of \eqref{eq:example2} should
coincide and equal $2$.
More generally, if a frame consists of orthonormal basis vectors
which are individually repeated several times, then the lower redundancy should
be the smallest number of repetitions and the upper redundancy the largest.
\gk{This should still hold true if the single frame vectors are arbitrarily scaled,
since resilience against erasures as one main aspect of redundancy should intuitively be invariant
under this operation.}

But even more, for general frames, redundancy should give us information, for instance, about
orthogonality and tightness of the frame, about the maximal number of spanning
sets and the minimal number of linearly independent sets our frame can be
divided into, and about robustness with respect to erasures. Ideally,
in the case of a unit-norm tight frame, upper and lower redundancy
should coincide and equal the customary measure of redundancy, which seems the appropriate
description for this very particular class of frames.

\subsection{Desiderata}
\label{subsec:desiderata}

Summarizing and analyzing the requirements we have discussed, we state the following list of
desired properties for an upper redundancy  $\cR^+_\Phi$ and a lower redundancy $\cR^-_\Phi$
of a frame $\Phi = (\varphi_i)_{i=1}^N$ for an $n$-dimensional real or complex Hilbert space $\cHn$.

\renewcommand{\labelenumi}{{\rm [D\arabic{enumi}]}}

\begin{enumerate}
\item\label{DEE} {\em Generalization.} If $\Phi$ is an equal-norm Parseval frame, then
in this special case the customary notion of redundancy shall be attained, i.e., $\cR^-_\Phi = \cR^+_\Phi = \frac{N}{n}$.
\item\label{DNyquist} {\em Nyquist Property.} The condition $\cR^-_\Phi = \cR^+_\Phi$ shall characterize
tightness of a normalized version of $\Phi$, thereby supporting the intuition that upper and lower
redundancy being different implies `non-uniformity' of the frame. In particular, $\cR^-_\Phi =
\cR^+_\Phi = 1$ shall be equivalent to orthogonality as the `limit-case'.
\item\label{Duplow} {\em Upper and Lower Redundancy.} Upper and lower redundancy shall be
`naturally' related by $0 < \cR^-_\Phi \le \cR^+_\Phi < \infty$.
\item\label{Dadditiv} {\em Additivity.} Upper and lower redundancy shall be subadditive and superadditive,
respectively, with respect to unions of frames. They shall be additive provided that the redundancy
is uniform, i.e., $\cR^-_\Phi=\cR^+_\Phi$.
\item\label{DInvariance} {\em Invariance.} Redundancy shall be invariant under the action of a unitary
operator on the frame vectors, under scaling of single frame vectors, as well as under permutation,
since intuitively all these actions should have no effect on, for instance, robustness against erasures,
\gk{which is one property redundancy shall intuitively measure.}
\item\label{Daverob} {\em Spanning Sets.} The lower redundancy shall measure the maximal number of
spanning sets of which the frame consists. This immediately implies that the lower
redundancy is a measure for robustness of the frame against erasures in the sense that any set of
a particular number of vectors can be deleted yet leave a frame.
\item\label{Dmaxrob} {\em Linearly Independent Sets.} The upper redundancy shall measure the minimal
number of linearly independent sets of which the frame consists.
\end{enumerate}

It is straightforward to verify that for the special type of frames consisting of orthonormal basis vectors, each
repeated a certain number of times, the upper and lower redundancies given by the maximal or minimal
number of repetitions satisfy these conditions. The challenge is now to extend this definition to all
frames in such a way that the properties are preserved.

\renewcommand{\labelenumi}{{\rm (\roman{enumi})}}

\subsection{Contribution of this paper}

The contribution of this paper is two-fold. Firstly, we introduce a list of
desiderata that a rationally defined quantitative notion of redundancy for a finite frame
shall satisfy. Secondly, we propose a notion of upper and lower redundancy
which will be shown to, in particular, satisfy all properties advocated
in those desiderata.


\section{Defining Redundancy and Main Result}

\subsection{Definitions}

As explained before, we first introduce a local redundancy, which
encodes the concentration of frame vectors around one point. Since the
norms of the frame vectors do not matter for concentration, we normalize
the given frame and also consider only points on the unit sphere $\SSn=\{x \in \cHn: \|x\|=1\}$
in $\cHn$. Hence another way to view local redundancy is by considering
it as some sort of density function on the sphere.

We now define a notion of local redundancy. \gk{For this, we remark that} throughout the paper, we
let \gk{$\langle y \rangle$ denote the span of some $y \in \cHn$ and $P_{\langle y \rangle}$
the orthogonal projection onto $\langle y \rangle$.}

\begin{definition}
Let $\Phi = (\varphi_i)_{i=1}^N$ be a frame for a finite-dimensional real or complex Hilbert space $\cHn$. For
each $x \in \SSn$, the {\em redundancy function}
$\cR_\Phi : \SSn \to \RR^+$ is defined by
\[
\cR_\Phi(x) = \sum_{i=1}^N \|P_{\langle \varphi_i \rangle} (x)\|^2
.
\]
\end{definition}

We might think about the function $\mathcal R_\Phi$ as a redundancy pattern on the sphere,
which measures redundancy at each single point. Also notice that this notion is reminiscent
of the fusion frame condition \cite{CKL08}, here for rank-one projections.

The following observation is rather trivial, but useful.

\begin{lemma}
\label{lemma:maxmin}
If $\Phi = (\varphi_i)_{i=1}^N$ is a frame for a finite-dimensional real or complex Hilbert space $\cHn$, then
the redundancy function $\cR_\Phi$ assumes its maximum and its minimum on the unit
sphere in $\mathcal H$.
\end{lemma}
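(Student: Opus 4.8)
The plan is to observe that $\cR_\Phi$ is a continuous real-valued function on the unit sphere $\SSn$, that $\SSn$ is a compact subset of the finite-dimensional space $\cHn$, and then to invoke the extreme value theorem. First I would rewrite the summands in a more transparent form. For each index $i$ with $\varphi_i \neq 0$ we have $P_{\langle\varphi_i\rangle}(x) = \frac{\ip{x}{\varphi_i}}{\norm{\varphi_i}^2}\varphi_i$, so that $\norm{P_{\langle\varphi_i\rangle}(x)}^2 = \frac{\absip{x}{\varphi_i}^2}{\norm{\varphi_i}^2}$, while any index with $\varphi_i = 0$ contributes a term that vanishes identically. Hence
\[
\cR_\Phi(x) = \sum_{i\,:\,\varphi_i\neq 0} \frac{\absip{x}{\varphi_i}^2}{\norm{\varphi_i}^2},
\]
which exhibits $\cR_\Phi$ as a finite sum of functions each continuous in $x$ (a nonnegative quadratic form in $x$ when $\cHn$ is real, and the restriction of one when it is complex). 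Therefore $\cR_\Phi : \SSn \to \RR^+$ is continuous.

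Next, since $\cHn$ is finite-dimensional, the unit sphere $\SSn = \{x\in\cHn : \norm{x}=1\}$ is closed and bounded, hence compact by the Heine--Borel theorem; it is also nonempty since $\cHn \neq \{0\}$. A continuous real-valued function on a nonempty compact set attains both its supremum and its infimum, so there exist $x^+, x^- \in \SSn$ with $\cR_\Phi(x^+) = \max_{x\in\SSn}\cR_\Phi(x)$ and $\cR_\Phi(x^-) = \min_{x\in\SSn}\cR_\Phi(x)$, which is exactly the assertion.

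I do not expect any substantive obstacle: the statement is essentially a packaging of the compactness of the sphere together with the manifest continuity of $\cR_\Phi$. The only point meriting a moment's care is the continuity of $x \mapsto \norm{P_{\langle\varphi_i\rangle}(x)}$, which is immediate from the explicit formula above, and the degenerate case $\varphi_i = 0$, which is harmless since that term is then constantly zero. Everything else is the standard extreme value argument.
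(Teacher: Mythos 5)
Your proof is correct and follows exactly the same route as the paper's: continuity of $\cR_\Phi$ plus compactness of the unit sphere in finite dimensions, then the extreme value theorem. The extra detail you supply (the explicit formula $\norm{P_{\langle\varphi_i\rangle}(x)}^2 = \absip{x}{\varphi_i}^2/\norm{\varphi_i}^2$ and the handling of zero vectors) is a harmless elaboration of what the paper dismisses as ``continuous by definition.''
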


\begin{proof}
By definition, the function $\mathcal R_\Phi$ is continuous. Moreover, since the unit sphere in finite dimensions is compact,
the function attains its extrema.
\end{proof}

This consideration allows us to define the maximal and minimal value the redundancy
function attains as upper and lower redundancy.

\begin{definition} \label{def:upplowred}
Let $\Phi = (\varphi_i)_{i=1}^N$ be a frame for a finite-dimensional real or complex Hilbert space $\cHn$.
Then the {\em upper redundancy of $\Phi$} is defined by
\[
\cR^+_\Phi = \max_{x \in \SSn} \cR_\Phi(x)
\]
and the {\em lower redundancy of $\Phi$} by
\[
\cR^-_\Phi = \min_{x \in \SSn} \cR_\Phi(x).
\]
Moreover, $\Phi$ has a {\em uniform redundancy}, if
\[
\cR^-_\Phi = \cR^+_\Phi.
\]
\end{definition}

This notion of redundancy hence equals the upper and lower frame bound of the
normalized frame. Surprisingly, this notion will be shown to satisfy
all desiderata we phrased.

\subsection{Main Result}

With the previously defined quantitative notion of upper and lower redundancy, we can
now verify the desired properties from Subsection \ref{subsec:desiderata} explicitly in the
following theorem, whose proof will be given in Section \ref{sec:proof}.

\begin{theorem}
\label{theo:desiderata}
Let $\Phi = (\varphi_i)_{i=1}^N$ be a frame for an $n$-dimensional real or complex Hilbert space   $\cHn$.
\begin{enumerate}
\item[{\rm [D1]}] {\em Generalization.} If $\Phi$ is an equal-norm Parseval frame, then
\[
\cR^-_\Phi = \cR^+_\Phi = \frac{N}{n}.
\]
\item[{\rm [D2]}] {\em Nyquist Property.} The following conditions are equivalent:
\bitem
\item[{\rm (i)}] We have $\cR^-_\Phi = \cR^+_\Phi$.
\item[{\rm (ii)}] The normalized version of $\Phi$ is tight.
\eitem
Also the following conditions are equivalent.
\bitem
\item[{\rm (i')}] We have $\cR^-_\Phi = \cR^+_\Phi = 1$.
\item[{\rm (ii')}] $\Phi$ is orthogonal.
\eitem
\item[{\rm [D3]}] {\em Upper and Lower Redundancy.} We have
\[
0 < \cR^-_\Phi \le \cR^+_\Phi < \infty.
\]
\item[{\rm [D4]}] {\em Additivity.} For each orthonormal basis $(e_i)_{i=1}^n$,
\[
\cR^\pm_{\Phi\cup(e_i)_{i=1}^n} = \cR^\pm_\Phi + 1.
\]
Moreover, for each frame $\Phi'$ in $\cHn$,
\[
\cR^-_{\Phi\cup\Phi'} \ge \cR^-_\Phi + \cR^-_{\Phi'}
\quad \mbox{and} \quad
\cR^+_{\Phi\cup\Phi'} \le \cR^+_\Phi + \cR^+_{\Phi'}.
\]
In particular, if $\Phi$ and $\Phi'$ have uniform redundancy, then
\[
\cR^-_{\Phi\cup\Phi'} = \cR_\Phi + \cR_{\Phi'} = \cR^+_{\Phi\cup\Phi'}.
\]
\item[{\rm [D5]}] {\em Invariance.}
Redundancy is invariant under application of a unitary operator $U$ on $\cHn$, i.e.,
\[
\cR^\pm_{U(\Phi)} = \cR^\pm_{\Phi},
\]
under scaling of the frame vectors, i.e.,
\[
\cR^\pm_{(c_i \varphi_i)_{i=1}^N} = \cR^\pm_{\Phi}, \quad c_i \mbox{ scalars},
\]
and under permutations, i.e.,
\[
\cR^\pm_{(\varphi_{\pi(i)})_{i=1}^N} = \cR^\pm_{\Phi}, \quad \pi \in S_{\{1,\ldots,N\}},
\]
\item[{\rm [D6]}] {\em Spanning Sets.} $\Phi$ contains
$\lfloor \cR^-_\Phi\rfloor$ disjoint spanning sets.
In particular, any set of $\lfloor\cR^-_\Phi\rfloor-1$ vectors can be deleted yet leave a frame.
\item[{\rm [D7]}] {\em Linearly Independent Sets.} \bgb{If
$\Phi$ does not contain any zero vectors, then it}
can be partitioned into $\lceil \cR^+_\Phi\rceil$ linearly independent sets.
\end{enumerate}
\end{theorem}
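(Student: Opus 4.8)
The plan is to prove the seven items in order, leaning on the fact that $\cR^\pm_\Phi$ are exactly the optimal frame bounds of the normalized frame $(\varphi_i/\|\varphi_i\|)$, equivalently the largest and smallest eigenvalues of its frame operator $S = \sum_i P_{\langle \varphi_i\rangle}$, since $\cR_\Phi(x) = \langle Sx,x\rangle$ on $\SSn$. With this identification most items become linear algebra.

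For [D1], if $\Phi$ is equal-norm Parseval then all $\|\varphi_i\|$ equal some common $c$, and $\sum_i \langle x,\varphi_i\rangle\varphi_i = x$ forces (taking traces) $Nc^2 = n$, so the normalized frame operator is $c^{-2}\Id = (N/n)\Id$; hence $\cR_\Phi \equiv N/n$. For [D2], $\cR^-_\Phi = \cR^+_\Phi$ means the self-adjoint operator $S$ has a single eigenvalue, i.e.\ $S = \lambda\Id$, which is precisely tightness of the normalized frame; and $\lambda = 1$ forces $\sum_i P_{\langle\varphi_i\rangle} = \Id$, a sum of rank-one projections equal to the identity, which by a rank/trace argument (the $P_{\langle\varphi_i\rangle}$ must be mutually orthogonal) is equivalent to the normalized $\varphi_i$ forming an orthonormal system of $n$ distinct vectors up to multiplicity — i.e.\ $\Phi$ is orthogonal. [D3] is immediate: $S$ is positive definite (as $\Phi$ spans) so its eigenvalues lie in $(0,\infty)$, and $\min \le \max$. [D4] follows from $S_{\Phi\cup\Phi'} = S_\Phi + S_{\Phi'}$ together with the Courant–Fischer / Weyl inequalities $\lambda_{\min}(A+B) \ge \lambda_{\min}(A)+\lambda_{\min}(B)$ and $\lambda_{\max}(A+B)\le \lambda_{\max}(A)+\lambda_{\max}(B)$; adding an orthonormal basis adds $\Id$, shifting every eigenvalue by exactly $1$; and in the uniform case both operators are scalar multiples of $\Id$ so the inequalities are equalities. [D5] is clear since conjugating by a unitary $U$ replaces $S$ by $USU^*$ (same eigenvalues), rescaling $\varphi_i$ does not change $P_{\langle\varphi_i\rangle}$, and permutation does not change the sum.

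The substantive items are [D6] and [D7]. For [D6] I would invoke the matroid-theoretic (or direct frame-theoretic) fact that a finite sequence of vectors spanning $\cHn$ can be partitioned into $k$ spanning subsets iff every subset $J$ of indices satisfies $|J^c| \le (\text{number of indices}) - k\cdot(\text{codimension of }\spann\{\varphi_i : i\in J\})$ — more cleanly, I would use the known result (Casazza–Kovačević / a base-covering theorem for matroids, or the ``Rado–Horn'' type statement) that $\Phi$ contains $m$ disjoint spanning sets iff for every subspace $W \subsetneq \cHn$, $|\{i : \varphi_i \in W\}| \le N - m\,\mathrm{codim}(W)$. Then for any proper subspace $W$ and any unit vector $x \in W^\perp$, $\cR^-_\Phi \le \cR_\Phi(x) = \sum_{\varphi_i\notin W^\perp}\|P_{\langle\varphi_i\rangle}x\|^2$; taking $W$ of codimension one spanned to make this sharp, or rather working with the Eckart–Young/interlacing bound $\cR^-_\Phi \le \frac{N - |\{i:\varphi_i\in W\}|}{\mathrm{codim}(W)}$ obtained by restricting the quadratic form to $W^\perp$ and computing its trace there, yields $m := \lfloor\cR^-_\Phi\rfloor \le \frac{N - |\{i:\varphi_i\in W\}|}{\mathrm{codim}(W)}$ for every proper $W$, hence the combinatorial criterion is met with $m$ spanning sets; the deletion statement follows since removing $m-1$ vectors leaves at least one full spanning set intact. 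For [D7], I would dualize: partitioning $\Phi$ (with no zero vectors) into $\ell$ linearly independent sets is possible iff, by the matching/Rado–Horn theorem, for every subspace $W$, $|\{i : \varphi_i \in W\}| \le \ell\cdot\dim W$; and the bound $\cR^+_\Phi \ge \cR_\Phi(x) \ge \frac{1}{\dim W}\sum_{\varphi_i \in W}\|P_{\langle\varphi_i\rangle}x\|^2$ averaged over an orthonormal basis of $W$ gives $\cR^+_\Phi \ge \frac{|\{i:\varphi_i\in W\}|}{\dim W}$, so $\ell := \lceil \cR^+_\Phi\rceil$ satisfies the criterion.

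The main obstacle is locating and correctly citing the right combinatorial partition theorems for [D6] and [D7] — the "maximum number of disjoint spanning subsequences" result and the Rado–Horn theorem on partitioning into independent sets — and verifying that my eigenvalue bounds on $\cR^\mp_\Phi$ exactly match the hypotheses of those theorems (in particular getting the floor/ceiling and the codimension factors right, and handling the normalization: since $P_{\langle\varphi_i\rangle}$ ignores the norm of $\varphi_i$, the combinatorial statements about $\Phi$ itself are unaffected, but I must be careful that a zero vector would break [D7], which is why that hypothesis is imposed). Once the combinatorial inputs are pinned down, the analytic half — bounding $\cR^\mp_\Phi$ by restricting the quadratic form $x\mapsto \sum_i\|P_{\langle\varphi_i\rangle}x\|^2$ to a well-chosen subspace and taking traces — is routine.
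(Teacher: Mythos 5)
Your proposal is correct, and for items [D1]--[D5] it coincides with the paper's proof up to phrasing: the paper likewise reads everything off the normalized frame operator $\tilde S_\Phi=\sum_i P_{\langle\varphi_i\rangle}$, whose extreme Rayleigh quotients are $\cR^\pm_\Phi$ (this is Lemma \ref{lemma:equniformred} for [D2], and pointwise additivity of the quadratic form for [D4]). The genuine divergence is in [D6] and [D7]. For [D6] the paper simply cites \cite{BCPS09} as a black box (``a unit-norm frame with lower frame bound $A$ contains $\lfloor A\rfloor$ disjoint spanning sets''), whereas you reduce to the matroid base-covering criterion and verify it directly: summing $\cR_\Phi$ over an orthonormal basis of $W^\perp$ gives $\mathrm{codim}(W)\cdot\cR^-_\Phi\le\sum_i\|P_{W^\perp}(\varphi_i/\|\varphi_i\|)\|^2\le N-|\{i:\varphi_i\in W\}|$, which is exactly the hypothesis needed; this makes the argument self-contained modulo the covering theorem. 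For [D7] both proofs invoke Rado--Horn, but the hypothesis is checked differently: the paper passes to the Parseval frame $(S^{-1/2}\varphi_i/\|\varphi_i\|)_i$, bounds the norms of its vectors below by $1/\cR^+_\Phi$, and uses the Parseval dimension formula, while you sum $\cR_\Phi$ over an orthonormal basis of $W=\spann\{\varphi_i:i\in I\}$ to get $|I|\le|\{i:\varphi_i\in W\}|\le\cR^+_\Phi\dim W$ directly, avoiding the $S^{-1/2}$ conjugation. Your trace estimates are correct (including the integrality bookkeeping for the floor and ceiling, and the observation that zero vectors must be excluded only in [D7]), so the only remaining task is, as you say, citing the two partition theorems precisely; one small nit is that in [D2](i')$\Rightarrow$(ii') the phrase ``orthonormal system up to multiplicity'' is misleading --- the trace identity forces exactly $n$ nonzero vectors, so no repetitions can occur and the normalized vectors form an orthonormal basis outright.
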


\subsection{Examples}

Let us now analyze the running examples \eqref{eq:example1} and \eqref{eq:example2}
from Subsection \ref{subsec:desiderata}, as well as introduce and analyze one
additional frame.  We will show that the upper and lower
redundancies precisely equal those values, which we intuitively anticipated a
reasonable notion to attain. We will further exploit Theorem \ref{theo:desiderata}
to derive additional information about the frames.

\begin{example}
\label{example:F1}
{\rm $\Phi_{1,s}$ satisfies
\[
\cR^-_{\Phi_{1,s}} = 1 \quad \mbox{and} \quad \cR^+_{\Phi_{1,s}} = s.
\]
This can be seen as follows. By definition,
\[
\cR_{\Phi_{1,s}}(x) = \sum_{i=1}^N \|P_{\langle \varphi_i \rangle}(x)\|^2 = s
\absip{x}{e_1}^2 + \sum_{i=2}^{n} \absip{x}{e_i}^2.
\]
Hence $\cR_{\Phi_{1,s}}(e_1) = s$. For $x \neq e_1$, letting $c_i =
\ip{x}{e_i}$,
\[
\cR_{\Phi_{1,s}}(x) = s c_1^2 + \sum_{i=2}^{n} c_2^2 = (s-1) c_1^2 + 1 <
(s-1) + 1 = s.
\]
This implies $\cR^+_{\Phi_{1,s}} = s$. Moreover,
\[
\cR_{\Phi_{1,s}}(e_2) = 1 \le \cR_{\Phi_{1,s}}(x) \quad \mbox{for all } x \neq e_2,
\]
which implies $\cR^-_{\Phi_{1,s}} = 1$.

Exploiting [D2], the frame $\Phi_{1,s}$ is neither orthogonal nor is it tight. [D6] tells us that
$\Phi_{1,s}$ can be split into $1$ spanning set, which is indeed the maximal number,
since, for instance, $e_2$ occurs one time and is orthogonal to all other elements
from the frame. Concluding from [D7], $\Phi_{1,s}$ can be partitioned into $s$ linearly
independent sets, which can be chosen as $\{e_1\}$ $s-1$ times and $\{e_1, \ldots, e_n\}$.
It is also evident that this is the minimal possible number, since the $s$ vectors $e_1$
need to be placed into separate linearly independent sets.
Naturally, the frame $\Phi_{1,s}$ can be normalized to become a tight frame. The upper and lower
redundancies however remain the same, since only the system of normalized vectors is considered.
}
\end{example}

\begin{example}
\label{example:F2}
{\rm $\Phi_2$ possesses a uniform redundancy.  More precisely,
\[
\cR^-_{\Phi_2} = 2 \quad \mbox{and} \quad \cR^+_{\Phi_2} = 2.
\]
This follows from
\[
\cR_{\Phi_2}(x) = \sum_{i=1}^N \|P_{\langle \varphi_i \rangle}(x)\|^2 = 2
\sum_{i=1}^{n} \absip{x}{e_i}^2 = 2,
\]
and taking the max and the min over the sphere.

Notice that $\Phi_2$ is a $2$-tight frame. Hence the uniform redundancy
coincides with the customary notion of redundancy as the quotient $(2n)/n = 2$ by [D1].
Further, by [D6] and [D7], $\Phi_2$ can be
partitioned into 2 spanning sets and also into 2 linearly independent sets. Those
partitions can here in fact be chosen to be the same, more precisely, can be chosen
to be the two orthonormal bases of which $\Phi_2$ is composed.}
\end{example}

\begin{example}
\label{example:F3}
{\rm We add a third example in which the frame is not merely composed of vectors from the unit basis
$\{e_1, \ldots, e_n\}$. Letting $0 < \varepsilon < 1$, we choose $\Phi_3 = (\varphi_i)_{i=1}^N$
as
\[
\varphi_i = \left\{\begin{array}{rcl}
e_1 & : & i=1,\\
\sqrt{1-\varepsilon^2} e_1 + \varepsilon e_i & : & i \neq 1.
\end{array} \right.
\]
This frame is strongly concentrated around the vector $e_1$. We first observe that
\[
\cR_{\Phi_3}(e_1) = \sum_{i=1}^N \|P_{\langle \varphi_i \rangle}(e_1)\|^2
= 1 + \sum_{i=2}^N \absip{e_1}{\sqrt{1-\varepsilon^2} e_1 + \varepsilon e_i}^2
= 1 + (N-1)(1-\varepsilon^2).
\]
However, this is not the maximum, which is in fact attained at the average point of the
frame vectors. But in order to avoid clouding the intuition by technical details,
we omit this analysis, and observe that
\[
1 + (N-1)(1-\varepsilon^2) \le \cR_{\Phi_3}^+ < N.
\]
Since
\[
\cR_{\Phi_3}(e_2) = \sum_{i=1}^N \|P_{\langle \varphi_i \rangle}(e_2)\|^2
= \sum_{i=2}^N \absip{e_2}{\sqrt{1-\varepsilon^2} e_1 + \varepsilon e_i}^2
= \varepsilon^2,
\]
we can conclude similarly, that
\[
0 < \cR_{\Phi_3}^- \le \varepsilon^2.
\]}
\end{example}
The frame $\Phi_3$ shows that the new redundancy notion gives little information
near the extreme cases:  $\cR^- \approx 0$ and $\cR^+ \approx N$,
but becomes increasingly more accurate as $\cR^-$ and $\cR^+$ become closer to one another
(cf. the discussion after Theorem \ref{lemma:rangeR}).
By [D2], the frame $\Phi_3$ is not orthogonal, nor is it tight. [D6] is not applicable
for this frame, since $\lfloor \cR_{\Phi_3}^- \rfloor = 0$ although there does exist a
partition into one spanning set. Hence, [D6] is not sharp, but becomes telling, if
$\cR^- \ge 2$. Now, [D7] implies that this frame can be partitioned into $N-1$ linearly
independent sets. Again, we see that we can do better than this by merely taking the
whole frame which happens to be linearly independent. As before, we observe that [D7]
is not sharp for large values of $\cR^+$.  However, these become increasingly accurate
as $\cR^-$ and $\cR^+$ approach $N/n$ (again we refer to the discussion following Theorem
\ref{lemma:rangeR}).


\section{Characterization of Redundancy Functions}



As we just saw, the notion of redundancy is built upon the notion of a redundancy
function. Hence to analyze the notion of redundancy more closely, we will investigate
characteristic properties of redundancy functions.

Interestingly, the redundancy function itself is a useful object to exploit.
Assume we use a frame $\Phi$ to encode a vector $x$
into its frame coefficients to prevent data loss if some of the coefficients are erased (lost or impractically delayed).
Indeed, any input vector can still be perfectly recovered if the set
of frame vectors belonging to coefficients, which are not erased, form a frame $\Phi'$, or equivalently,
if the associated redundancy function $\mathcal R_{\Phi'}$ is strictly positive.
However, $\mathcal R_{\Phi'}$ contains useful information even if this is not the case:
For any input vector $x$ and any residual set $\Phi'$,
the projection of $x$ onto the orthogonal complement of the zero set of $\mathcal R_{\Phi'}$ can be recovered.
In practice, the input and the erasures are typically random, and only some information about their
distribution is known. To achieve a small distortion of the transmitted vector, it is then desirable to
choose the frame in such a way that the input vectors are concentrated near
the orthogonal complement of the (random) zero set of $\mathcal R_{\Phi'}$.
This is a new type of
frame design problem arising from the redundancy function, which we will
investigate in detail elsewhere.

The first observation we make in this context is that  a frame is not uniquely
specified by its redundancy function,
since we can scale the single frame vectors arbitrarily, yet the frame is still
associated with the same normalized frame. Thus, searching for a frame
which possesses a predefined redundancy function is in fact searching for the
following equivalence class.

\begin{proposition}
Let $\FF$ be the set of frames for  a finite-dimensional real or complex Hilbert space $\cHn$. Then the relation $\sim$ on $\FF$ defined by
\[
\Phi \sim \Psi \quad : \Longleftrightarrow \quad \cR_\Phi = \cR_\Psi
\]
is an equivalence relation on $\FF$.
\end{proposition}

\begin{proof}
All three conditions, reflexivity, symmetry, and transitivity are immediate by definition of
the relation.
\end{proof}

The just introduced equivalence relation can be described in a different way by linking the redundancy
function with the associated frame operator. For this, we require the following notion: For a
frame $\Phi = (\varphi_i)_{i=1}^N$ in $\cHn$, we let $\tilde{S}_\Phi$ denote the frame operator with
respect to the normalized version of $\Phi$, i.e.,
\[
\tilde{S}_\Phi(x) = \sum_{i=1}^N P_{\langle \varphi_i \rangle}.
\]
Further, we denote the associated quadratic form by
\[
\mathcal Q_\Phi(x) = \langle \tilde S_\Phi x, x\rangle,
\]
and note that$\mathcal Q_\Phi$ extends $\mathcal R_\Phi$ to all $x \in \mathcal H$.

We recall that indeed, there is a one-one correspondence
between positive (semi-)definite operators and quadratic forms.

\begin{theorem}{\protect\cite[Theorem 3.5]{Wei80}} \label{theo:friedrich}
Each positive (semi-)definite, bounded operator $A$
on a real or complex Hilbert space $\mathcal H$ is uniquely determined by the associated quadratic
form $\mathcal Q_A(x)=\langle Ax,x\rangle$, $x \in \mathcal H$.
\end{theorem}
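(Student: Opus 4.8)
This is the classical fact that a bounded operator on a Hilbert space can be reconstructed from its quadratic form via polarization, and the plan is to carry out exactly that reconstruction and then to invoke non-degeneracy of the inner product. First I would recover the full sesquilinear (respectively bilinear) form $(x,y)\mapsto\langle Ax,y\rangle$ from $\mathcal Q_A$.

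In the complex case one uses the polarization identity
\[
\langle Ax,y\rangle=\frac14\sum_{k=0}^{3}i^{k}\,\mathcal Q_A\bigl(x+i^{k}y\bigr),
\]
so that $\mathcal Q_A$ already determines $\langle Ax,y\rangle$ for all $x,y\in\mathcal H$; here positivity plays no role whatsoever. In the real case this cannot hold for arbitrary bounded operators -- adding any skew-adjoint operator leaves the quadratic form unchanged -- so the hypothesis must be used: a positive semi-definite operator is in particular self-adjoint, and the real polarization identity
\[
\langle Ax,y\rangle=\frac14\bigl(\mathcal Q_A(x+y)-\mathcal Q_A(x-y)\bigr)
\]
recovers the symmetric bilinear form associated with $A$, which coincides with $\langle Ax,y\rangle$ precisely because $A=A^{*}$.

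To finish, suppose $A$ and $B$ are positive semi-definite bounded operators with $\mathcal Q_A=\mathcal Q_B$. Applying the relevant polarization identity to the bounded operator $A-B$ gives $\langle (A-B)x,y\rangle=0$ for all $x,y\in\mathcal H$; choosing $y=(A-B)x$ yields $\|(A-B)x\|^{2}=0$ for every $x$, hence $A=B$.

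The only subtle point is the real case: the statement is genuinely false without the self-adjointness that is built into the notion of positive semi-definiteness, so the argument must explicitly invoke it. Everything else is routine, and since $A$ and $B$ are assumed bounded there are no form-domain technicalities to track; for unbounded positive operators one would instead phrase the uniqueness in terms of the associated closed quadratic form, but that refinement is not needed here.
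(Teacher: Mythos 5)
Your proposal is correct and follows the same route the paper indicates: recover the sesquilinear (resp.\ symmetric bilinear) form $\langle Ax,y\rangle$ from $\mathcal Q_A$ via the complex (resp.\ real) polarization identity, then conclude $A=B$ from non-degeneracy of the inner product. You additionally make explicit the point the paper leaves implicit --- that in the real case self-adjointness (guaranteed by positive semi-definiteness) is genuinely needed, since skew-adjoint perturbations do not change the quadratic form --- which is a worthwhile clarification but not a different argument.
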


The essence of the proof is the so-called polarization identity. For
real Hilbert spaces, we have
$$
  \langle A x, y\rangle = \frac 1 4 (\mathcal Q_A(x+y) -\mathcal Q_A(x-y))
  $$
and for the complex case
$$
 \langle A x, y\rangle = \frac 1 4 (\mathcal Q_A(x+y) -\mathcal Q_A(x-y))
   + \frac{i}{4} (\mathcal Q_A(x+iy) -\mathcal Q_A(x-iy)) \, .
$$
We use this fact to establish when two frames belong to the same equivalence class.

\begin{corollary}
\label{lemma:RandS}
If $\Phi, \Psi$ are two frames for a finite-dimensional Hilbert space $\cHn$, then the following conditions are equivalent.
\bitem
\item[{\rm (i)}] $\cR_\Phi = \cR_\Psi$ on $\SSn$.
\item[{\rm (ii)}] $\tilde{S}_\Phi = \tilde{S}_\Psi$ on $\cHn$.
\eitem
\end{corollary}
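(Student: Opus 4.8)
The plan is to reduce the statement entirely to Theorem \ref{theo:friedrich}, using the two facts already recorded in the excerpt: that $\cR_\Phi$ is the restriction to $\SSn$ of the quadratic form $\mathcal Q_\Phi$ associated with $\tilde S_\Phi$, and that a quadratic form is homogeneous of degree two, so that knowing it on the sphere is the same as knowing it on all of $\cHn$. I would present the two implications separately.

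First I would dispatch (ii) $\Rightarrow$ (i): if $\tilde S_\Phi = \tilde S_\Psi$ as operators on $\cHn$, then their associated quadratic forms agree, i.e.\ $\mathcal Q_\Phi(x) = \langle \tilde S_\Phi x, x\rangle = \langle \tilde S_\Psi x, x\rangle = \mathcal Q_\Psi(x)$ for every $x \in \cHn$. Restricting to $x \in \SSn$ and using that $\mathcal Q_\Phi$ extends $\mathcal R_\Phi$ (and likewise for $\Psi$) yields $\cR_\Phi = \cR_\Psi$ on $\SSn$.

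For the converse (i) $\Rightarrow$ (ii), I would first upgrade the hypothesis from the sphere to all of $\cHn$. Fix $x \in \cHn \setminus \{0\}$ and put $u = x/\norm{x} \in \SSn$. Each summand $\norm{P_{\langle \varphi_i \rangle}(\cdot)}^2$, and hence $\mathcal Q_\Phi$, is homogeneous of degree two, so $\mathcal Q_\Phi(x) = \norm{x}^2 \mathcal Q_\Phi(u) = \norm{x}^2 \cR_\Phi(u) = \norm{x}^2 \cR_\Psi(u) = \mathcal Q_\Psi(x)$; the case $x = 0$ is trivial. Thus $\mathcal Q_\Phi = \mathcal Q_\Psi$ on all of $\cHn$. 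Since $\tilde S_\Phi$ and $\tilde S_\Psi$ are positive semidefinite, bounded operators on the finite-dimensional Hilbert space $\cHn$ (in fact positive definite, being sums of rank-one orthogonal projections onto a spanning set) that share the same associated quadratic form, Theorem \ref{theo:friedrich} forces $\tilde S_\Phi = \tilde S_\Psi$, which is (ii).

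I do not expect a genuine obstacle here; the only point that deserves to be stated carefully is the degree-two homogeneity that propagates the identity from $\SSn$ to $\cHn$, after which the cited uniqueness theorem (whose engine is the polarization identity recalled just above it) closes the argument.
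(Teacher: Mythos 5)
Your proof is correct and follows essentially the same route as the paper: both extend $\cR_\Phi$ to the quadratic form $\mathcal Q_\Phi$ on all of $\cHn$ via degree-two homogeneity and then invoke the one-to-one correspondence between positive operators and their quadratic forms from Theorem \ref{theo:friedrich}. The only difference is presentational---you separate the two implications and make the homogeneity step explicit, whereas the paper states the equivalence in one stroke.
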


\begin{proof}
The redundancy function of a frame $\Upsilon$ extends to all $x \in \mathcal H$
by the quadratic scaling
\[
\mathcal Q_\Upsilon(x)=\|x\|^2 \cdot \mathcal R_\Upsilon(x/\|x\|),
\]
which defines a quadratic form and thus is equal to
$\mathcal Q_\Upsilon(x)=\langle \tilde S_\Upsilon x,x\rangle$. Since the quadratic
form $\mathcal Q_\Upsilon$ and the operator $\tilde S_\Upsilon$ are in one-to-one correspondence
by Theorem \ref{theo:friedrich},
equality of the redundancy function for two frames $\Phi$ and $\Psi$ is equivalent
to the normalized
frame operators being identical.
%
\end{proof}

It follows from Corollary \ref{lemma:RandS} that equivalent frames
$\Phi$ and $\Psi$, i.e., $\cR_\Phi \sim \cR_\Psi$,
must have the same
number of non-zero frame vectors.  That is, the number of non-zero frame vectors is the sum of the
eigenvalues of the {\em equal} frame operators $\tilde{S}_\Phi = \tilde{S}_\Psi$.

Since tight frames are in some sense the most natural generalization of orthonormal
bases, one might ask whether each equivalence class contains at least one tight frame.
It is easily seen that, for each $N\ge n$,
one of the equivalence classes contains all the unit norm tight frames with $N$ vectors
plus non-zero multiples of their frame vectors.  i.e.  $(c_i \Phi_i)_{i=1}^N$, with $c_i \not= 0$.
Other classes {\em may} contain tight frames.  For example, the equivalence class of the
frame $\Phi_{1,s}$ contains the Parseval frame
\[ \{ s^{-1/2} e_1, \ldots, s^{-1/2} e_1,
e_2,\ldots, e_n\}, \quad \mbox{where $e_1$ occurs $s$ times}.\]
In general, an equivalence class need not contain any tight frames at all.  For example, consider
the frame $\Phi_3$. This is a unit-norm linearly independent set, whose frame operator is not a multiple of
the identity, but the sum of its eigenvalues is
$n$.  Now assume towards a contradiction that there exists a tight frame $\Psi = (\psi_i)_{i=1}^m$
in its equivalence class. Then -- as just mentioned -- we must have that $m=n$, hence it must
be an equal-norm orthogonal set. This implies $\cR_{\Psi} = {\rm Id}_{\cHn} \not= \cR_{\Phi_3}$,
a contradiction.

\medskip

Given a positive self-adjoint rank-$n$ operator $S$ on
$\cHn$, we next characterize when it coincides with the frame operator
of a normalized frame.

\begin{proposition}
\label{prop:operatorcharac}
Let $T$ be a positive, invertible operator on a real or complex Hilbert space $\cHn$ with eigenvalues $\lambda_1, \ldots, \lambda_n$.
Then the following conditions are equivalent.
\bitem
\item[{\rm (i)}] There exists a frame $\Phi = (\varphi_i)_{i=1}^N$ with $\varphi_i \neq 0$ for all $i$
such that $T=\tilde{S}_\Phi$.
\item[{\rm (ii)}] There exists some $N \in \mathbb N$, $N \ge n$ such that
\[
\sum_{i=1}^n \lambda_i = N.
\]
\eitem
\end{proposition}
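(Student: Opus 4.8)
The plan is to establish the two implications separately, the substance being in $(ii)\Rightarrow(i)$. For $(i)\Rightarrow(ii)$ suppose $T=\tilde{S}_\Phi=\sum_{i=1}^N P_{\langle\varphi_i\rangle}$ with every $\varphi_i\neq 0$. Each $P_{\langle\varphi_i\rangle}$ is then a rank-one orthogonal projection, so $\tr P_{\langle\varphi_i\rangle}=1$, and taking traces gives $\sum_{i=1}^n\lambda_i=\tr T=N$, which lies in $\NN$. Moreover, since $T$ is invertible the vectors $(\varphi_i)_{i=1}^N$ span $\cHn$, forcing $N\ge n$; this gives (ii).

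For $(ii)\Rightarrow(i)$, order the eigenvalues $\lambda_1\ge\cdots\ge\lambda_n>0$. Constructing a frame $\Phi=(\varphi_i)_{i=1}^N$ of unit vectors with $\sum_i\varphi_i\varphi_i^*=T$ amounts, after passing to the Gram matrix $G=(\langle\varphi_j,\varphi_i\rangle)_{i,j=1}^N$, to producing an $N\times N$ positive semidefinite matrix $G$ whose spectrum is $\lambda_1,\dots,\lambda_n,0,\dots,0$ (the eigenvalues of $T$ padded with $N-n$ zeros) and all of whose diagonal entries equal $1$. By the classical Schur--Horn theorem, which applies equally to real symmetric matrices, such a $G$ exists if and only if the constant sequence $(1,\dots,1)$ of length $N$ is majorized by $(\lambda_1,\dots,\lambda_n,0,\dots,0)$. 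Equality of the full sums is precisely the hypothesis $\sum_i\lambda_i=N$, and the partial-sum inequalities are automatic: for $1\le k\le n$ the average of the $k$ largest eigenvalues is at least the overall average $N/n\ge 1$, so $\sum_{i=1}^k\lambda_i\ge k\,(N/n)\ge k$, while for $n<k\le N$ the $k$ largest entries of the length-$N$ sequence sum to $\sum_{i=1}^n\lambda_i=N\ge k$. Hence the required majorization, and therefore $G$, exists.

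It remains to read the frame off $G$. Write $G=B^*B$ with $B$ an $n\times N$ matrix --- possible because $G$ is positive semidefinite of rank $n$ --- and let $\varphi_1,\dots,\varphi_N\in\cHn$ be the columns of $B$. Then $\|\varphi_i\|^2=G_{ii}=1$, so each $\varphi_i$ is nonzero, and $\sum_i\varphi_i\varphi_i^*=BB^*$ is a positive operator on $\cHn$ sharing the nonzero spectrum $\lambda_1,\dots,\lambda_n$ of $G$; being an $n\times n$ positive operator with the same spectrum as $T$, it equals $WTW^*$ for some unitary $W$ on $\cHn$. Replacing each $\varphi_i$ by $W^*\varphi_i$, which preserves norms, converts $BB^*$ into $T$ and yields a unit-norm frame (it spans since $T$ is invertible) with $\tilde{S}_\Phi=\sum_i P_{\langle\varphi_i\rangle}=\sum_i\varphi_i\varphi_i^*=T$, completing $(ii)\Rightarrow(i)$. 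The main obstacle is the Schur--Horn step, i.e.\ the nontrivial existence of a Hermitian matrix with prescribed eigenvalues and prescribed majorized diagonal; should one wish to keep the paper self-contained, it can be replaced by a direct induction on $n$ in which one peels off suitably chosen unit vectors, decreasing $\tr T$ by the number removed and the dimension by one when appropriate --- the route familiar from the literature on frames with a prescribed frame operator --- but invoking Schur--Horn is the shortest and cleanest path.
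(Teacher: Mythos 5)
Your proof is correct. The implication (i) $\Rightarrow$ (ii) is exactly the paper's argument: take the trace of $T=\sum_{i=1}^N P_{\langle\varphi_i\rangle}$, each rank-one projection contributing $1$, and use that a spanning set must have $N\ge n$ elements. For (ii) $\Rightarrow$ (i) you diverge: the paper simply invokes the result of Dykema--Freeman--Kornelson--Larson--Ordower--Weber (and Casazza--Leon) guaranteeing an equal-norm frame with a prescribed positive invertible frame operator whenever the trace is an integer $N\ge n$, whereas you rederive this from the Schur--Horn theorem by building the $N\times N$ Gram matrix with spectrum $(\lambda_1,\dots,\lambda_n,0,\dots,0)$ and constant unit diagonal, and then factoring $G=B^*B$ and conjugating $BB^*$ onto $T$. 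Your majorization check is right (top-$k$ averages dominate the overall average $N/n\ge 1$ for $k\le n$, and the full sum $N\ge k$ handles $n<k\le N$), and the passage from $G$ to the frame via the equality of the nonzero spectra of $B^*B$ and $BB^*$ is clean. The trade-off is essentially one of packaging: the cited frame-theoretic result is itself proved by an induction closely related to Schur--Horn (as you note), so your route makes the proposition self-contained modulo a classical matrix theorem, at the cost of the Gram-matrix detour; the paper's route is shorter and, as the authors emphasize afterwards, inherits an explicit inductive construction of the frame from the cited works, which they use to claim constructivity. Both are valid; yours is arguably the more transparent for a reader who knows Schur--Horn but not the frame-operator literature.
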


\begin{proof}
(i) $\Rightarrow$ (ii). By (i), $T$ is the frame operator associated
with the normalized version of $\Phi$.  Hence the trace of $T$ \gk{satisfies}
\[
\gk{\tr[T] = }N=\sum_{i=1}^N \left\|\varphi_i/\norm{\varphi_i}\right\|^2 = \sum_{j=1}^n \lambda_j \, ,
\]
and the number of frame vectors has to satisfy $N\ge n$ because $\Phi$ is spanning.

(ii) $\Rightarrow$ (i). Let $(e_i)_{i=1}^n$ be the orthonormal eigenbasis of $T$ such that
\[
Te_i = \lambda_i e_i, \quad i=1,\ldots,n \, ,
\]
and assume the sum of the eigenvalues is an integer $N\ge n$.
By a  result from \cite{DFKLOW,CL09}, there exists an equal-norm frame
$\Phi=(\varphi_i)_{i=1}^N$ having $T$ as its frame operator, with
$\|\varphi_i\|=1$ for all $i\in \{1,2, \dots n\}$. This implies
$S_\Phi=\tilde S_\Phi$, thus $T=\tilde S_\Phi$ as required.
\end{proof}

We remark that Proposition \ref{prop:operatorcharac} is constructive, because
the inductive proof in \cite{DFKLOW,CL09}  provides such a frame.

To prepare the characterization of redundancy functions further, we recall
Gleason's theorem.
\begin{theorem}\cite{Gle57}  \label{theo:Gleason}
Let $\mathcal H$ be a Hilbert space of dimension $n\ge 3$, and let $g: \mathbb S \to \mathbb R^+_0$
be chosen such that
\[
\sum_{i=1}^n g(e_n) = 1
\]
for any orthonormal basis $\{e_1, e_2, \dots e_n\}$. Then there exists a trace-normalized positive definite
operator $T$ such that $\langle Tx,x\rangle = g(x)$ for all $x \in \mathbb S^n$.
\end{theorem}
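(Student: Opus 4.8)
The plan is to reconstruct the classical proof of Gleason's theorem: reduce to the real three-dimensional case, treat nonnegative frame functions on $S^2$ by spherical harmonics, and patch the resulting local quadratic forms into a global one. The statement itself is standard — it is precisely the assertion of \cite{Gle57} — so in the end I would present Theorem~\ref{theo:Gleason} as a citation and here only sketch the architecture of the argument.

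The first step is to note that the hypothesis passes to subspaces: for any three-dimensional real subspace $V \subseteq \cH$, the restriction $g|_{V\cap\SSn}$ still sums to a fixed constant $W_V$ over every orthonormal basis of $V$ — call such a $g$ a \emph{frame function of weight $W_V$}. Granting that each such restriction coincides with $x \mapsto \langle T_V x, x\rangle$ for a positive symmetric operator $T_V$ on $V$ with $\tr T_V = W_V$, polarization upgrades it to a bilinear form on $V$, and the forms attached to overlapping subspaces must agree, because any two unit vectors — and, in the complex case, any unit vector together with its unimodular scalar multiples — lie in a common real three-dimensional subspace. Assembling these local forms yields a single quadratic form $x \mapsto \langle T x, x\rangle$ on $\cH$; positivity of $T$ comes from $g \ge 0$, the trace normalization from $\sum_i g(e_i) = 1$ for one orthonormal basis, and in the complex case one verifies that applying the frame-function identity to complex orthonormal bases forces the real-bilinear form to be Hermitian. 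Thus everything reduces to the claim that a nonnegative frame function on $S^2$ is the restriction of a quadratic form.

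For that claim I would separate a ``smooth'' part from an a priori regularity part. Assuming first that $g$ is \emph{continuous}, fix a reference orthonormal basis $(e_i)_{i=1}^3$ and expand $g = \sum_{\ell\ge 0} g_\ell$ into spherical harmonics. Since $SO(3)$ acts irreducibly on each component $\mathcal{H}_\ell$, the linear map sending $g_\ell$ to the function $\rho \mapsto \sum_i g_\ell(\rho e_i)$ on $SO(3)$ is equivariant, hence either zero or injective; a direct computation shows it is identically zero exactly for $\ell = 2$, injective for $\ell = 1$ and for every $\ell \ge 3$, and carries a level-$0$ constant to three times itself. The frame-function condition says that $\rho \mapsto \sum_i g(\rho e_i)$ is the constant $W$; matching spherical-harmonic levels, this forces $g_\ell = 0$ for $\ell = 1$ and all $\ell \ge 3$, forces $g_0 \equiv W/3$, and leaves $g_2$ an arbitrary harmonic quadratic. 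Since constants plus harmonic quadratics are exactly the restrictions to $S^2$ of forms $\langle T x, x\rangle$ with $\tr T = W$, and $g \ge 0$ forces $T \ge 0$, the continuous case is complete.

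The genuine obstacle — and the technical heart of Gleason's theorem — is the remaining input: that \emph{every} nonnegative frame function on $S^2$ is automatically continuous. Here one first establishes boundedness by a geometric argument that converts any blow-up of $g$ near a point into an orthonormal triple whose weights cannot sum to $W$, and then promotes boundedness to continuity by an oscillation estimate along great circles. This step admits no short proof; by contrast the spherical-harmonic computation and the patching argument above are routine. For this reason I would quote the continuity statement, and hence Theorem~\ref{theo:Gleason} itself, from \cite{Gle57} rather than reproduce it.
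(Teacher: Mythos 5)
The paper offers no proof of this statement at all---it is recorded purely as a citation to \cite{Gle57}---and your proposal ultimately does the same, so the two are consistent. Your outline of Gleason's argument (reduction to real three-dimensional subspaces and patching by polarization, the $SO(3)$-equivariance/spherical-harmonics computation showing only the levels $\ell=0$ and $\ell=2$ survive, and the identification of the automatic continuity of nonnegative frame functions on $S^2$ as the irreducibly hard step that must be quoted) is an accurate description of the classical proof and adds useful context beyond what the paper provides.
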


We need a slight generalization of this theorem.

\begin{corollary}\label{cor:Gleason}
Let $\mathcal H$ be a Hilbert space of dimension $n\ge 3$, let $N > 0$, and let
$g: \mathbb S \to \mathbb R^+_0$ be chosen such that
\[
\sum_{i=1}^n g(e_n) = N
\]
for any orthonormal basis $\{e_1, e_2, \dots e_n\}$. Then there exists a positive definite
operator $T$ with \gk{$\tr[T]=N$} such that $\langle Tx,x\rangle = g(x)$ for all $x \in \mathbb S$.
Moreover,  $g$ is strictly positive if and only if $T$ is invertible.
\end{corollary}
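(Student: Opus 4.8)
The plan is to deduce the corollary directly from Gleason's theorem (Theorem~\ref{theo:Gleason}) by a simple homogeneity (rescaling) argument. First I would pass from $g$ to the normalized function $\tilde g := g/N$, which again maps $\SSn$ into $\RR^+_0$. For any orthonormal basis $\{e_1,\dots,e_n\}$ of $\cHn$ one has $\sum_{i=1}^n \tilde g(e_i) = \frac1N\sum_{i=1}^n g(e_i) = 1$, so $\tilde g$ meets the hypothesis of Theorem~\ref{theo:Gleason} (recall that $n\ge 3$ is assumed). That theorem then supplies a trace-normalized positive (semidefinite) operator $\tilde T$ on $\cHn$ with $\langle \tilde T x,x\rangle = \tilde g(x)$ for all $x\in\SSn$.

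Next I would undo the scaling: set $T := N\tilde T$. Then $T$ is positive, $\tr[T] = N\,\tr[\tilde T] = N$, and for every $x\in\SSn$ we have $\langle Tx,x\rangle = N\langle \tilde T x,x\rangle = N\tilde g(x) = g(x)$, which is exactly the operator asserted to exist. For the final equivalence I would argue through the spectrum of $T$: since $T$ is positive semidefinite, it is invertible if and only if it is strictly positive definite, i.e.\ if and only if $\langle Tx,x\rangle>0$ for all $x\neq 0$; by homogeneity this holds if and only if $\langle Tx,x\rangle>0$ for all $x\in\SSn$, which is precisely the condition $g(x)>0$ on $\SSn$.

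There is essentially no obstacle here; the entire substance of the statement is carried by Gleason's theorem, and the generalization from total mass $1$ to total mass $N$ is the one-line observation above. The only two small points to keep in mind are that the hypothesis of Theorem~\ref{theo:Gleason} is genuinely preserved under the scaling $g\mapsto g/N$ (immediate from linearity of the summation condition), and that ``positive definite'' in Theorem~\ref{theo:Gleason}, and hence in the present corollary, is to be read as ``positive semidefinite'' whenever $g$ is permitted to vanish — which is consistent with, and indeed the reason for, the invertibility clause we have just verified.
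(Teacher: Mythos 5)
Your proposal is correct and follows essentially the same route as the paper: the existence part is the same rescaling of Gleason's theorem (the paper dismisses it as ``a simple scaling argument''), and your invertibility argument via strict positivity of $\langle Tx,x\rangle$ on the unit sphere is just a restatement of the paper's observation that the minimum of $g$ over $\mathbb S$ equals the smallest eigenvalue of $T$. Your closing remark that ``positive definite'' in Theorem~\ref{theo:Gleason} must be read as ``positive semidefinite'' when $g$ may vanish is a fair and worthwhile clarification of the paper's loose terminology.
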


\begin{proof}
The first part is a simple scaling argument. The second part of this corollary follows from the
first one because the minimum of $g$ is the smallest eigenvalue of $T$.
\end{proof}

We are now ready to state and prove the main result of this section which provides a
complete characterization of all functions \gk{on} the sphere which are redundancy functions
of an equivalence class of frames.

\begin{theorem}
\label{theo:equivalenceR}
Let $f : \SSn \to \RR_0^+$, $\mathcal H$ be an $n$-dimensional real or complex Hilbert space
with $n\ge 3$,
and let $q$ be the extension of $f$ to $\mathcal H$ given by $q(0)=0$
and $q(x)=\|x\|^2  f(x/\|x\|)$ for any $x \neq 0$. Let $\omega$ denote
the probability measure on the unit sphere which is invariant under
all unitaries.
Then the following conditions are equivalent.
\bitem
\item[{\rm (i)}] There exists a frame $\Phi$ for $\cHn$ such that
\[
f(x) = \cR_\Phi(x) \quad \mbox{for all } x \in \SSn.
\]
\item[{\rm (ii)}] The function $f$ is strictly positive on $\mathbb S$, its extension
$q$
satisfies the parallelogram identity
\[
q(x+y) + q(x-y) = 2(q(x)+q(y)) \quad \mbox{for all } x,y \in \SSn
\]
and $f$ integrates to
$$
  \int_{\SSn} f(x)d\omega(x)= N/n \,
$$
with some integer $N\ge n$.
\item[{\rm (iii)}]
The function $f$ is strictly positive on $\mathbb S$ and there exists an integer $N\ge n$ such that
for any ortho\-normal basis $\{e_1, e_2, \dots e_n\}$,
\[
\sum_{i=1}^n f(e_i) = N \, .
\]
\eitem
Also the following conditions are equivalent.
\bitem
\item[{\rm (i')}] There exists a unit-norm tight frame $\Phi$ for $\cHn$ such that
\[
f(x) = \cR_\Phi(x) \quad \mbox{for all } x \in \SSn.
\]
\item[{\rm (ii')}] There exists some integer $N \ge n$ such that
\[
f(x)=N/n \quad \mbox{for all } x \in \SSn.
\]
\eitem
\end{theorem}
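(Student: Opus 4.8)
The plan is to prove the three unprimed conditions equivalent by running the cycle (i) $\Rightarrow$ (iii) $\Rightarrow$ (ii) $\Rightarrow$ (i), and then to dispatch the primed equivalences separately. The organizing principle is the identification, essentially contained in Corollary \ref{lemma:RandS}, of redundancy functions with restrictions to $\SSn$ of quadratic forms $x \mapsto \langle Tx, x\rangle$ attached to positive definite operators $T$ whose trace is an integer at least $n$; granting this, Proposition \ref{prop:operatorcharac} and Corollary \ref{cor:Gleason} supply the remaining content.

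For (i) $\Rightarrow$ (iii): if $f = \cR_\Phi$, then $f$ is strictly positive because the normalized system of non-zero frame vectors spans $\cHn$ and hence retains a positive lower frame bound, and for any orthonormal basis $\{e_i\}_{i=1}^n$ one has $\sum_{i=1}^n f(e_i) = \sum_{i=1}^n \langle \tilde S_\Phi e_i, e_i\rangle = \tr[\tilde S_\Phi]$, which is exactly the number $N$ of non-zero frame vectors; since these vectors still span $\cHn$, $N \ge n$. For (iii) $\Rightarrow$ (ii): I would invoke Corollary \ref{cor:Gleason} (this is where $n \ge 3$ enters) to obtain a positive definite $T$ with $\tr[T] = N$ and $\langle Tx, x\rangle = f(x)$ on $\SSn$; then the homogeneous extension satisfies $q(x) = \langle Tx, x\rangle$ for every $x$, so the parallelogram identity holds for all $x, y$ and in particular on $\SSn$, $f$ is strictly positive since $T$ is, and averaging $U^{*}TU$ over a Haar-random unitary $U$ (which produces $\frac{\tr[T]}{n}\Id$) gives $\int_{\SSn} f\, d\omega = \int_{\SSn} \langle Tx, x\rangle\, d\omega(x) = \frac{\tr[T]}{n} = N/n$.

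The substantial step is (ii) $\Rightarrow$ (i). Here I would first argue that the parallelogram identity on $\SSn$, combined with the $2$-homogeneity built into $q$, already forces $q$ to be a genuine quadratic form: restricting to an arbitrary two-dimensional subspace $W$ and expanding $\theta \mapsto q(\cos\theta\, u + \sin\theta\, v)$ in a Fourier series, the parallelogram identity annihilates every frequency except the constant and the $\pm 2$ modes, so $q|_W$ is a quadratic form on $W$; consequently the parallelogram identity holds for all pairs $x, y$, not merely unit ones, and the classical Jordan--von Neumann polarization argument yields $q(x) = \langle Tx, x\rangle$ for a self-adjoint $T$. Strict positivity of $f$ makes $T$ positive definite, its smallest eigenvalue being $\min_{x \in \SSn} f(x) > 0$, and $N := \tr[T] = n\int_{\SSn} f\, d\omega$ is by hypothesis an integer $\ge n$; Proposition \ref{prop:operatorcharac} then delivers a frame $\Phi$ with all $\varphi_i \ne 0$ and $\tilde S_\Phi = T$, whence $\cR_\Phi = f$ on $\SSn$. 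I expect this passage from the sphere-restricted parallelogram identity to a quadratic form — and the bookkeeping it requires in the complex case — to be the main obstacle; the rest is either routine or delegated to Gleason's theorem and Proposition \ref{prop:operatorcharac}.

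The primed equivalences are short. If $\Phi$ is a unit-norm tight frame of $N$ vectors, then $\tilde S_\Phi = S_\Phi = \frac{N}{n}\Id$, since its trace is $N$ and all its eigenvalues coincide, so $\cR_\Phi \equiv N/n$ on $\SSn$, giving (i') $\Rightarrow$ (ii'). Conversely, if $f \equiv N/n$ for an integer $N \ge n$, then $q(x) = \frac{N}{n}\|x\|^2 = \langle \frac{N}{n}\Id\, x, x\rangle$, so $T = \frac{N}{n}\Id$ has trace $N$, and the construction of \cite{DFKLOW,CL09} used in Proposition \ref{prop:operatorcharac} produces a unit-norm frame with frame operator $T$; since $T$ is a scalar multiple of the identity this frame is tight, which proves (ii') $\Rightarrow$ (i').
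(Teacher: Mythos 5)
Your proposal is essentially correct and draws on the same toolkit as the paper (Gleason's theorem via Corollary \ref{cor:Gleason}, the Jordan--von Neumann theorem, and Proposition \ref{prop:operatorcharac}), but organizes it differently: the paper proves (i)$\Leftrightarrow$(ii) and (i)$\Leftrightarrow$(iii) as two separate equivalences anchored at (i), whereas you run the cycle (i)$\Rightarrow$(iii)$\Rightarrow$(ii)$\Rightarrow$(i), so that Gleason is consumed in (iii)$\Rightarrow$(ii) and Jordan--von Neumann only in (ii)$\Rightarrow$(i). The most valuable difference is your handling of the sphere-restricted parallelogram identity: condition (ii) only postulates the identity for $x,y\in\SSn$, while Jordan--von Neumann needs it for all vectors, and the paper applies the theorem without bridging this gap. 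Your Fourier-mode argument on two-dimensional subspaces (the identity annihilates all frequencies of $\theta\mapsto q(\cos\theta\,u+\sin\theta\,v)$ except $0$ and $\pm 2$, after noting $f(-x)=f(x)$) does bridge it and checks out; it is a genuine improvement over the published argument. Your direct proof of (i')$\Leftrightarrow$(ii') is also fine and slightly more complete than the paper's one-line appeal to Lemma \ref{lemma:equniformred}, which by itself only yields (i')$\Rightarrow$(ii').

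One caveat, exactly where you predicted trouble: in the complex case the step (ii)$\Rightarrow$(i) has a real gap, in your write-up and in the paper alike. The parallelogram identity (even for all $x,y$) together with real two-homogeneity only produces a \emph{real}-bilinear symmetric form; nothing in (ii) forces $f(e^{i\theta}x)=f(x)$, which every redundancy function satisfies since $|\langle e^{i\theta}x,\varphi\rangle|=|\langle x,\varphi\rangle|$. For instance $q(x)=\|x\|^2+\varepsilon\,\mathrm{Re}(x_1^2)$ on $\CC^3$ is strictly positive for small $\varepsilon$, satisfies the parallelogram identity for all $x,y$, and integrates to $1=N/n$ with $N=n=3$ (the perturbation averages to zero since $\omega$ is invariant under $x\mapsto ix$), yet it cannot equal any $\cR_\Phi$ because $q(ie_1)\neq q(e_1)$. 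Condition (iii) escapes this because the basis-sum condition over \emph{all} orthonormal bases is precisely what Gleason needs to manufacture a complex-linear operator. So in the complex case you should either strengthen the reading of (ii) to include $q(\lambda x)=|\lambda|^2 q(x)$ for unimodular $\lambda$ (equivalently, use the complex polarization identity), or route (ii)$\Rightarrow$(i) through (iii); flag this explicitly rather than leaving it as ``bookkeeping.''
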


\begin{proof}
We first focus on the equivalence of (i) and (ii). For this, notice that, by Corollary~\ref{lemma:RandS},
(i) is equivalent to the existence of a frame $\Phi$ for $\cHn$ such that $f(x) =
\norm{\tilde{S}_\Phi^{1/2} x}^2$ for all $x \in \SSn$, where $\tilde{S}_\Phi^{1/2}$ is positive
and invertible. Since, by \cite{CT09}, each positive, invertible operator is the
frame operator of an equal-norm frame, (i) is equivalent to the existence of a positive, invertible operator $S$
satisfying   $f(x) = \norm{S^{1/2} x}^2$ for all $x \in \SSn$. Hence (i) is equivalent to $f$ defining a new norm
\[
\norm{S^{1/2} x}^2 = f(x) =: |||x|||^2
\]
on $\mathcal H$ which, by Lemma \ref{lemma:RandS}, satisfies
\[
|||x|||^2 =  \ip{S x}{x}.
\]
Since this defines a new inner product on $\cHn$ by $(x,y) = \ip{S x}{y}$, (i) is equivalent
to the fact that the norm defined by $f$ is induced by an inner product. Hence, by the Jordan-von Neumann theorem
\cite{JN35}, condition (i) is equivalent to the parallelogram identity. Moreover,
the operator associated with the quadratic form $q$
can be written  as a sum of orthogonal projection operators
if and only if its trace is a positive integer. This amounts to
\[
\int_{\mathbb S}  f(x)d\omega(x) =N/n, \quad  \mbox{for some positive integer }N\, ,
\]
see, for example, the proof in \cite[Proposition 3.2]{BBCE09}.
Finally we observe that $f$ is strictly positive if and only if $N\ge n$, because otherwise
the sum of projections would not be invertible and thus would not yield a frame operator.

To verify the equivalence of (i) and (iii), we note that given a frame $\Phi$ we can always remove
vanishing vectors from it without changing $\mathcal R_\Phi$. Since $\mathcal R_\Phi$ extends to the quadratic
form of the normalized frame operator $\tilde S_\Phi$,  its trace satisfies
$\mathop{\mathrm{tr}}[\tilde S_\Phi]= N$, $N$ being the
number of (non-zero) projections summed to obtain $\tilde S_\Phi$. This trace can be computed in any orthonormal
basis, so
\[
\mathop{\mathrm{tr}}[\tilde S_\Phi]=\sum_{i=1}^n \mathcal R_\Phi(e_i)= N.
\]
Conversely, we recall  that the version of
Gleason's theorem stated in Corollary~\ref{cor:Gleason} yields that any $f$ satisfying the summation
condition extends to the quadratic form
of an operator $T$. Moreover, $T$ is invertible because $f$ is assumed to be strictly positive
on $\mathbb S$, and
thus $T$ is the frame operator for some $\Phi$. Again invoking \cite{DFKLOW}, the frame can be
assumed to be unit norm, thus $T=\tilde S_\Phi= S_\Phi$.

The equivalence of (i') and (ii') follows from Lemma \ref{lemma:equniformred}.
\end{proof}

We remark that the hypothesis $n \ge 3$ is only relevant for the equivalence with (iii), since
the proof of these equivalences exploits Gleason's theorem.

We further remark that the previous theorem is in fact constructive, since the Jordan-von Neumann theorem
together with the polarization identity can be used to explicitly compute
the frame operator $\tilde{S}_\Phi$ associated with a function  $f : \SSn \to \RR_0^+$ which
extends to a quadratic form. Then, using the technique in \cite{DFKLOW,CCHKP09}, an associated
unit-norm frame can be explicitly constructed.

It is not clear to us, what the correct equivalent condition would be, if we drop the word
`unit-norm' in (i'). This concerns the question of a characterization of normalized frames which
come from tight frames. This in turn is closely related to the still open question of when the
frame vectors can be scaled so that a tight frame is generated as well as to the question of
which equivalence classes contain a tight frame.






\section{Upper and Lower Redundancy}

Having studied and characterized redundancy functions, we now focus on the notion
of redundancy itself and will provide more insight into it, in addition to Theorem \ref{theo:desiderata}.

When introducing a new notion, one of the first questions should concern its range. The
following result will provide precise information about the range of the upper and lower
redundancy, and even characterize when there does exist a frame such that a particular pair of values for upper and lower redundancy can be attained.

\bgb{
To avoid inessential complications, we again exclude the case of frames which contain zero vectors.}

\begin{theorem}
\label{lemma:rangeR}
\bgb{Let $\Phi = (\varphi_i)_{i=1}^N$ be a frame for a real or complex Hilbert space $\cHn$
having dimension
$n\ge 2$ and let $\varphi_i \neq 0$ for all $i \in \{1,2, \dots, N\}$.
The upper and lower redundancies of $\Phi$ then}
satisfy the inequalities
\beq \label{eq:firstclaim}
0 < \cR^-_\Phi \le \frac{N}{n} \le \cR^+_\Phi < N.
\eeq
Moreover, if $\cR^-_\Phi = \frac{N}{n}$ or $\cR^+_\Phi = \frac{N}{n}$, then the normalized version of $\Phi$ is a tight frame.

Finally, let $n\le N$, $r_1 \in (0,\frac{N}{n}]$, and $r_2 \in [\frac{N}{n},N)$.
Then the following conditions are equivalent.
\bitem
\item[{\rm (i)}]  There exists a frame $\Phi = (\varphi_i)_{i=1}^N$  for  $\cHn$, $n \ge 2$, such that
\[
\cR^-_\Phi = r_1 \quad \mbox{and} \quad \cR^+_\Phi = r_2.
\]
\item[{\rm (ii)}] We have
\[
(n-1)r_1+r_2 \le N.
\]
\eitem
In particular, for every $r_1\in (0,\frac{N}{n}]$
and every $r_2 \in [\frac{N}{n},N)$, we can find unit-norm frames $\Phi = (\varphi_i)_{i=1}^N$  and
$\Psi = (\psi_i)_{i=1}^N$  with
\[
\cR^-_\Phi = r_1 \quad \mbox{and} \quad \cR^+_\Psi = r_2.
\]
\end{theorem}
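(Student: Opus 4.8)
Every part of the statement is a statement about the spectrum of the normalized frame operator $\tilde S_\Phi=\sum_{i=1}^N P_{\langle\varphi_i\rangle}$. Since no $\varphi_i$ vanishes, each $P_{\langle\varphi_i\rangle}$ is a rank-one orthogonal projection of trace $1$, so $\tilde S_\Phi$ is positive definite (because $\Phi$ is spanning) with $\tr[\tilde S_\Phi]=N$. Writing $0<\lambda_1\le\dots\le\lambda_n$ for its eigenvalues, we have $\sum_{j=1}^n\lambda_j=N$, and since $\cR_\Phi(x)=\langle\tilde S_\Phi x,x\rangle$ on $\SSn$ by Corollary~\ref{lemma:RandS}, the Rayleigh--Ritz principle gives $\cR^-_\Phi=\lambda_1$ and $\cR^+_\Phi=\lambda_n$. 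From $\lambda_1\le\frac1n\sum_j\lambda_j=\frac Nn\le\lambda_n$ I read off the two middle inequalities in \eqref{eq:firstclaim}; positive definiteness gives $\cR^-_\Phi=\lambda_1>0$; and $\cR^+_\Phi=\lambda_n=N-\sum_{j<n}\lambda_j<N$ because $n\ge 2$ forces $\sum_{j<n}\lambda_j\ge\lambda_1>0$. If $\cR^-_\Phi=N/n$ (resp.\ $\cR^+_\Phi=N/n$), then an extreme eigenvalue equals the average of all eigenvalues, which forces $\lambda_1=\dots=\lambda_n=N/n$, i.e.\ $\tilde S_\Phi=\frac Nn\Id$ and the normalized frame is $\frac Nn$-tight. (Alternatively, $\int_{\SSn}\cR_\Phi\,d\omega=N/n$ together with continuity of $\cR_\Phi$ gives the same conclusions.)

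\textbf{The equivalence \rm(i)$\Leftrightarrow$(ii).}
For (i)$\Rightarrow$(ii): if $\cR^-_\Phi=r_1$ and $\cR^+_\Phi=r_2$, then $\lambda_1=r_1$, $\lambda_n=r_2$, and $r_1\le\lambda_j\le r_2$ for $j=2,\dots,n-1$, so $N=\sum_j\lambda_j\ge r_1+(n-2)r_1+r_2=(n-1)r_1+r_2$. For (ii)$\Rightarrow$(i) I would prescribe the spectrum: set $\lambda_1:=r_1$, $\lambda_n:=r_2$, and choose $\lambda_2,\dots,\lambda_{n-1}\in[r_1,r_2]$ with $\sum_{j=2}^{n-1}\lambda_j=N-r_1-r_2$; this is feasible exactly when $(n-2)r_1\le N-r_1-r_2\le(n-2)r_2$, the left inequality being precisely (ii) and the right one the companion bound $N\le r_1+(n-1)r_2$ (for $n=2$ there are no interior eigenvalues and feasibility reads $r_1+r_2=N$). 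Letting $T$ be the positive invertible operator that is diagonal with these eigenvalues in some orthonormal basis, we have $\tr[T]=N\in\NN$ with $N\ge n$, so Proposition~\ref{prop:operatorcharac} produces a unit-norm frame $\Phi=(\varphi_i)_{i=1}^N$ with $\varphi_i\ne 0$ and $\tilde S_\Phi=T$, whence $\cR^-_\Phi=r_1$ and $\cR^+_\Phi=r_2$.

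\textbf{The one-sided realizations, and the main obstacle.}
Given $r_1\in(0,N/n]$, use the spectrum $\{r_1,\mu,\dots,\mu\}$ with $\mu=\frac{N-r_1}{n-1}$: then $\mu\ge r_1$ precisely because $r_1\le N/n$, so $r_1$ is the minimum, the trace is $N$, and Proposition~\ref{prop:operatorcharac} yields the desired unit-norm frame $\Phi$. Symmetrically, given $r_2\in[N/n,N)$, use $\{\nu,\dots,\nu,r_2\}$ with $\nu=\frac{N-r_2}{n-1}\in(0,r_2]$ (positivity from $r_2<N$, the bound $\le r_2$ from $r_2\ge N/n$), so that $r_2$ is the maximum, and Proposition~\ref{prop:operatorcharac} yields $\Psi$. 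The only genuinely delicate point is the synthesis direction: realizing a \emph{prescribed} pair of extreme eigenvalues forces all interior eigenvalues into $[r_1,r_2]$, so one must package the feasibility condition with care and treat separately the degenerate regimes $n=2$ and $r_1=N/n$ or $r_2=N/n$, where tightness collapses the entire spectrum; once an admissible spectrum is in hand, Proposition~\ref{prop:operatorcharac} does all the remaining work, since its construction outputs a unit-norm --- hence zero-vector-free --- frame realizing any frame operator whose trace is an integer $\ge n$.
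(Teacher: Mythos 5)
Your argument is the paper's argument recast in explicitly spectral terms: the paper obtains $\cR^-_\Phi\le N/n\le\cR^+_\Phi$ by integrating $\cR_\Phi$ over the sphere against the invariant measure, whereas you use $\tr[\tilde{S}_\Phi]=N$ and the Rayleigh--Ritz identification $\cR^-_\Phi=\lambda_1$, $\cR^+_\Phi=\lambda_n$; these amount to the same computation. Your proofs of \eqref{eq:firstclaim}, of the \emph{moreover} part, and of (i) $\Rightarrow$ (ii) are correct (the paper proves $\cR^+_\Phi<N$ by a collinearity argument, you by $\lambda_n=N-\sum_{j<n}\lambda_j<N$; both work). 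Your treatment of the \emph{in particular} part is actually sounder than the paper's: the paper produces $\Phi$ with $\cR^-_\Phi=r_1$ by invoking the equivalence with $r_2=N/n$, but by the \emph{moreover} part $\cR^+_\Phi=N/n$ forces the normalized frame to be tight and hence $\cR^-_\Phi=N/n$, so that route cannot yield $r_1<N/n$; your direct prescription of the spectra $\{r_1,\mu,\dots,\mu\}$ and $\{\nu,\dots,\nu,r_2\}$ avoids this entirely.

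The ``delicate point'' you flag in (ii) $\Rightarrow$ (i) is not a packaging issue: it is a genuine obstruction to the equivalence as stated, and the paper's proof contains exactly the gap you identify. The paper simply asserts that (ii) ``implies the existence of real numbers $r_1=\lambda_1\le\cdots\le\lambda_n=r_2$ satisfying $\sum_{j}\lambda_j=N$''; as you compute, such a sequence exists if and only if $(n-1)r_1+r_2\le N\le r_1+(n-1)r_2$, and the right-hand inequality does not follow from $r_1\in(0,N/n]$, $r_2\in[N/n,N)$ together with (ii). Concretely, take $n=2$, $N=3$, $r_1=1/2$, $r_2=8/5$: condition (ii) reads $2.1\le 3$ and holds, yet every frame of three nonzero vectors in a two-dimensional space satisfies $\cR^-_\Phi+\cR^+_\Phi=\tr[\tilde{S}_\Phi]=3\ne 2.1$, so (i) fails. (For $n\ge 3$ take, e.g., $n=N=4$, $r_1=1/10$, $r_2=11/10$: the two interior eigenvalues would have to lie in $[r_1,r_2]$ yet sum to $2.8>2.2$.) So the equivalence is only correct with (ii) strengthened to the two-sided trace condition; since the implication as printed is false, your proof of it is necessarily incomplete, and you should state the needed companion bound as a correction to the theorem rather than deferring the feasibility check to a closing remark.
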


\begin{proof}
For the proof of \eqref{eq:firstclaim}, we recall from the proof of Theorem~\ref{theo:equivalenceR} that $N/n$ is the mean value
of $f$ with respect to the probability measure $\omega$ on the sphere $\mathbb S$, which implies
\begin{equation} \label{eq:claimpart1}
 \min_{x \in \mathbb S} \mathcal R_\Phi (x) \le \int_{\mathbb S}  \mathcal R_\Phi (x) d\omega(x)
 = \frac{N}{n} \le \max_{x \in \mathbb S} \mathcal R_\Phi (x) .
\end{equation}
Furthermore, since the vectors in $\Phi$ span the finite dimensional space $\cHn$
for each $x$, there exists some $i \in \{1,\ldots,N\}$ such that $\ip{x}{\varphi_i} \neq 0$,
and hence $\|P_{\langle \varphi_i \rangle} (x) \|^2>0$. This yields
\beq \label{eq:claimpart2}
0<\cR^-_\Phi.
\eeq
Finally, we have
\beq \label{eq:claimhelp}
\cR_\Phi(x) = \sum_{i=1}^N \| P_{\langle \varphi_i \rangle} (x) \|^2
\le \sum_{i=1}^N \norm{x}^2  = N \quad \mbox{for all } x \in \mathbb S.
\eeq
Now assume that we have equality in \eqref{eq:claimhelp} for some $x \in \mathbb S$. This implies that
\[
\| P_{\langle \varphi_i \rangle} (x)  \|^2=\langle P_{\langle \varphi_i \rangle} (x) ,x\rangle=\|x\|^2
\quad \mbox{for all }i \in \{1, 2, \dots n\},
\]
and thus $x$ is an eigenvector of eigenvalue one for all $P_{\langle \varphi_i \rangle}$. Since
each $P_{\langle \varphi_i \rangle}$ is rank one and projects on the span of $\varphi_i$, either $x=0$
or all $\varphi_i$ are collinear. However, if all $\varphi_i$ are collinear, they cannot span
$\mathcal H$ if its dimension is $n\ge 2$. Hence, it follows that
\beq \label{eq:claimpart3}
\cR^+_\Phi < N.
\eeq
Combining \eqref{eq:claimpart1}, \eqref{eq:claimpart2}, and \eqref{eq:claimpart3} proves \eqref{eq:firstclaim}.

For the {\em moreover}-part, we notice that if $\cR^-_\Phi = \frac{N}{n}$, then
the average of $\mathcal R_\Phi$ equals its minimum. This implies
$\omega(\{x \in \mathbb S: \mathcal R_\Phi(x) >N/n\})=0$.
Now the continuity of $\mathcal R_\Phi$ ensures that it is constant.

Next we study the equivalence between (i) and (ii). To prove (i) $\Rightarrow$ (ii),
let $\Phi = (\varphi_i)_{i=1}^N$ be a frame for $\cHn$ which without loss of generality
we can assume to be equal-norm. Then assume that the
frame operator for $(\varphi_i/\|\varphi_i\|)_{i=1}^N$ has eigenvalues
\[
\cR^-_\Phi= r_1 = \lambda_1 \le \lambda_2 \le \ldots \le \lambda_n = r_2= \cR^+_\Phi.
\]
From this,
\[
(n-1)r_1 +r_2 \le \sum_{j=1}^n \lambda_i = N,
\]
hence (ii) follows directly.

For the converse direction, we observe that (ii) implies the existence of real numbers
$r_1 = \lambda_1 \le \lambda_2 \le \ldots \le \lambda_n = r_2$ satisfying
\[
\sum_{j=1}^n \lambda_j = N.
\]
By Proposition \ref{prop:operatorcharac}, we can find an equal-norm frame $(\varphi_i)_{i=1}^N$
whose frame operator has eigenvalues $\lambda_1, \ldots, \lambda_n$. Since
\[
\sum_{i=1}^N \|\varphi_i\|^2 = \sum_{j=1}^n \lambda_j = N,
\]
the frame $(\varphi_i)_{i=1}^N$ is even unit-norm. This proves (i).

It remains to prove the {\em in particular}-part. On the one hand, given $r_1 \in (0,\frac{N}{n}]$,
we choose $r_2 = \frac{N}{n}$. Hence (ii) is satisfied, and by the equivalence of (i)
and (ii) there exists a frame $\Phi = (\varphi_i)_{i=1}^N$ for $\cHn$ so that $\cR^-_\Phi=r_1$.
If, on the other hand, we are given $r_2 \in [\frac{N}{n},n)$, then we may choose $r_1 \in (0,\frac{N}{n}]$
small enough such that (i) is satisfied. Again, arguing as before, there then exists a frame
$\Phi = (\varphi_i)_{i=1}^N$ for $\cHn$ so that $\cR^+_\Phi=r_2$.

The proof of the theorem is complete.
\end{proof}

We remark that Theorem \ref{lemma:rangeR} is constructive, since already
Proposition \ref{prop:operatorcharac} -- which was employed to show existence of an equal-norm
frame in the equivalence of (i) and (ii) -- was constructive.

Let us now, for a moment, analyze the previous result in light of the interpretation of $\cR^-_\Phi$
provided by [D6] in terms of partitioning $\Phi$ into spanning sets and of $\cR^+_\Phi$ provided by [D7]
in terms of partitioning $\Phi$ into linearly independent sets. If the redundancy is not uniform, both
partitions might not coincide. However, if the redundancy is uniform, hence the values of
$\cR^-_\Phi$ and $\cR^+_\Phi$ both equal $N/n$, the partitions suddenly can be chosen to be the same.
In fact, \cite{BCPS09} shows that in this case we can partition our frame into $\lfloor \frac{N}{n}
\rfloor$ linearly independent spanning sets plus a linearly independent set.
We remind the reader that Examples \ref{example:F1}, \ref{example:F2}, and \ref{example:F3}
already gave a hint of the fact that [D6] and [D7] become sharper as they approach the
value $N/n$.

\medskip

In case of an equal-norm frame, the upper and lower redundancy is immediately
computed from the frame bounds.

\begin{lemma}
\label{lemma:equniformred}
Let  $\Phi = (\varphi_i)_{i=1}^N$ be an equal-norm frame for a Hilbert space $\cHn$,
having frame bounds $A$ and $B$. Set $c = \|\varphi_i\|^2$ for all $i=1,\ldots,N$. Then
\[
\cR^-_\Phi = \frac{A}{c} \quad \mbox{and} \quad \cR^+_\Phi = \frac{B}{c}.
\]
\end{lemma}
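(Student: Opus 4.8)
The plan is to relate the redundancy function $\cR_\Phi$ directly to the frame operator of the normalized frame and then simply read off the maximum and minimum as eigenvalues. First I would observe that since $\Phi$ is equal-norm with $\|\varphi_i\|^2 = c$, the normalized frame $(\varphi_i/\|\varphi_i\|)_{i=1}^N = (c^{-1/2}\varphi_i)_{i=1}^N$ has frame operator $\tilde S_\Phi = \frac{1}{c} S_\Phi$, because scaling every frame vector by a common factor $c^{-1/2}$ scales the frame operator by $c^{-1}$. Consequently, for every $x \in \SSn$,
\[
\cR_\Phi(x) = \sum_{i=1}^N \|P_{\langle \varphi_i\rangle}(x)\|^2 = \langle \tilde S_\Phi x, x\rangle = \frac{1}{c}\langle S_\Phi x, x\rangle = \frac{1}{c}\sum_{i=1}^N |\langle x, \varphi_i\rangle|^2.
\]

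Next I would use the characterization of the optimal frame bounds $A$ and $B$ as the extreme values of the quadratic form $x \mapsto \sum_{i=1}^N |\langle x,\varphi_i\rangle|^2$ on the unit sphere: these are precisely the smallest and largest eigenvalues of $S_\Phi$, so
\[
A = \min_{x \in \SSn} \sum_{i=1}^N |\langle x,\varphi_i\rangle|^2, \qquad B = \max_{x \in \SSn} \sum_{i=1}^N |\langle x,\varphi_i\rangle|^2.
\]
Combining this with the displayed identity gives $\cR^-_\Phi = \min_{x\in\SSn}\cR_\Phi(x) = A/c$ and $\cR^+_\Phi = \max_{x\in\SSn}\cR_\Phi(x) = B/c$, which is the assertion. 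The extrema are attained by Lemma~\ref{lemma:maxmin} (equivalently, by compactness of the sphere), so there is no subtlety in passing between $\min$/$\max$ and $\inf$/$\sup$.

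There is essentially no obstacle here; the only point requiring a word of care is the identification of $A$ and $B$ with the extreme eigenvalues of $S_\Phi$, which is the standard min-max (Rayleigh quotient) characterization for the self-adjoint positive operator $S_\Phi$ and holds in both the real and complex finite-dimensional setting. Everything else is a direct substitution using that a common rescaling of frame vectors rescales the frame operator by the square of the scaling factor. Since this lemma also supplies the missing implication needed for the equivalence of (i$'$) and (ii$'$) in Theorem~\ref{theo:equivalenceR} (an equal-norm frame has uniform redundancy exactly when $A = B$, i.e.\ exactly when it is tight), no further ingredients are needed.
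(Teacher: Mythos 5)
Your proof is correct and follows essentially the same route as the paper: both reduce $\cR_\Phi(x)$ to $c^{-1}\sum_{i=1}^N \absip{x}{\varphi_i}^2$ (you via the normalized frame operator, the paper directly from $\|P_{\langle\varphi_i\rangle}(x)\|^2 = c^{-1}\absip{x}{\varphi_i}^2$) and then invoke the characterization of the optimal frame bounds as the extrema of that quadratic form on the unit sphere. No gaps.
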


\begin{proof}
By definition,
\[
\cR_\Phi(x) = \sum_{i=1}^N \| P_{\langle \varphi_i \rangle}(x)\|^2 = c^{-1} \sum_{i=1}^N \absip{x}{\varphi_i}^2.
\]
The claim now follows from the characterization of the frame bounds
\[
A = \min_{x \in \SSn} \sum_{i=1}^N \absip{x}{\varphi_i}^2 \quad \mbox{and} \quad
B = \max_{x \in \SSn} \sum_{i=1}^N \absip{x}{\varphi_i}^2.
\]
\end{proof}

Another question concerns the change of redundancy once an invertible operator is
applied to a frame. This, in particular, relates the upper and lower redundancies of a frame to
those of its canonical dual.

\begin{lemma}
Let $\Phi = (\varphi_i)_{i=1}^N$ be a frame for a real or complex Hilbert space $\cHn$. For any invertible operator $T$ on $\cHn$,
\beq \label{eq:unitaryestimate}
\kappa(T)^{-2}\cR^\pm_\Phi \le \cR^\pm_{T(\Phi)} \le \kappa(T)^2
\cR^\pm_\Phi \, ,
\eeq
where $\kappa(T)=\|T\|\|T^{-1}\|$ denotes the condition number of $T$.

In particular, if $S_\Phi$ denotes the frame operator associated with $\Phi$,
and $\widetilde{\Phi}$ denotes the canonical dual frame of $\Phi$, then
\[
\kappa(S_\Phi)^{-1}
\cR^\pm_\Phi \le \cR^\pm_{\widetilde{\Phi}} \le\kappa(S_\Phi) \cR^\pm_\Phi.
\]
\end{lemma}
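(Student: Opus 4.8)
The goal is to estimate how the redundancy function behaves under an invertible operator $T$, and then specialize to the canonical dual frame. The key is to relate the normalized frame operator $\tilde S_{T(\Phi)}$ of the image frame to that of $\Phi$, but since the projections $P_{\langle T\varphi_i\rangle}$ are onto the spans of the rotated-and-rescaled vectors, the relation between $\tilde S_{T(\Phi)}$ and $T\tilde S_\Phi T^*$ is not an identity — only a two-sided norm comparison. So I would work directly with the redundancy function rather than with operator identities. First I would fix $x\in\SSn$ and write, using the definition of the redundancy function,
\[
\cR_{T(\Phi)}(x) = \sum_{i=1}^N \|P_{\langle T\varphi_i\rangle}(x)\|^2
= \sum_{i=1}^N \frac{|\langle x, T\varphi_i\rangle|^2}{\|T\varphi_i\|^2}
= \sum_{i=1}^N \frac{|\langle T^*x, \varphi_i\rangle|^2}{\|T\varphi_i\|^2}.
\]
Now I would bound $\|T\varphi_i\|^2$ from above and below by $\|T\|^2\|\varphi_i\|^2$ and $\|T^{-1}\|^{-2}\|\varphi_i\|^2$ respectively (the lower bound because $\|\varphi_i\| = \|T^{-1}T\varphi_i\|\le \|T^{-1}\|\,\|T\varphi_i\|$). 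This sandwiches $\cR_{T(\Phi)}(x)$ between $\|T^{-1}\|^{-2}\|T\|^{-2}\sum_i |\langle T^*x,\varphi_i\rangle|^2/\|\varphi_i\|^2$ and $\|T\|^2\|T^{-1}\|^2$ times the same sum; i.e.\ between $\kappa(T)^{-2}\cR_\Phi(T^*x/\|T^*x\|)\cdot\|T^*x\|^2$ and $\kappa(T)^2$ times that quantity, after extending $\cR_\Phi$ to the quadratic form $\mathcal Q_\Phi$.

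**Handling the argument $T^*x$.** The previous step has introduced a rescaled argument $T^*x$, so I next need to compare $\mathcal Q_\Phi(T^*x) = \|T^*x\|^2\,\cR_\Phi(T^*x/\|T^*x\|)$ with $\cR_\Phi(x)$ itself when taking max and min over the sphere. Here I would again use only norm bounds: $\mathcal Q_\Phi$ is the quadratic form of $\tilde S_\Phi$, so $\mathcal Q_\Phi(T^*x) = \langle \tilde S_\Phi T^*x, T^*x\rangle$, and comparing this with $\mathcal Q_\Phi(x)$ reduces to comparing $\|\tilde S_\Phi^{1/2}T^*x\|$ with $\|\tilde S_\Phi^{1/2}x\|$, which again costs a factor $\kappa(T)$ in each direction (since $T^*$ has the same norm and the same inverse-norm as $T$). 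Collecting all the factors, one gets
\[
\kappa(T)^{-2}\cdot\kappa(T)^{-2}\,?\,\cR_\Phi(x)\le \cR_{T(\Phi)}(x)\le \cdots,
\]
which at first glance over-counts; the resolution is to be careful and \emph{not} go through $\mathcal Q_\Phi$ twice. Instead, after the first sandwich I have $\cR_{T(\Phi)}(x)$ between $\kappa(T)^{\mp2}\sum_i|\langle T^*x,\varphi_i\rangle|^2/\|\varphi_i\|^2$, and I recognize the sum as exactly $\mathcal Q_\Phi(T^*x)$. Taking the maximum over $x\in\SSn$ on both sides and using that $\{T^*x: x\in\SSn\}$ is contained in the ball of radius $\|T\|$ and contains the ball of radius $\|T^{-1}\|^{-1}$, together with the fact that $\mathcal Q_\Phi$ is homogeneous of degree $2$ and monotone in $\|x\|$ along rays, I would conclude $\cR^+_{T(\Phi)}\le \kappa(T)^2\cR^+_\Phi$ and symmetrically $\cR^-_{T(\Phi)}\ge\kappa(T)^{-2}\cR^-_\Phi$. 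The reverse inequalities then follow either by running the same argument with $T^{-1}$ in place of $T$ applied to the frame $T(\Phi)$, or directly by the same two-sided estimate; this is the cleanest route and avoids the double-counting trap.

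**The canonical dual.** For the second statement, I would recall that $\widetilde\Phi = (S_\Phi^{-1}\varphi_i)_{i=1}^N = S_\Phi^{-1}(\Phi)$, so it is $T(\Phi)$ with $T = S_\Phi^{-1}$. Since $S_\Phi$ is positive and self-adjoint, $\|S_\Phi^{-1}\| = \lambda_{\min}(S_\Phi)^{-1}$ and $\|(S_\Phi^{-1})^{-1}\| = \|S_\Phi\| = \lambda_{\max}(S_\Phi)$, hence $\kappa(S_\Phi^{-1}) = \lambda_{\max}(S_\Phi)/\lambda_{\min}(S_\Phi) = \kappa(S_\Phi)$. Plugging $\kappa(T)^2 = \kappa(S_\Phi)^2$ into \eqref{eq:unitaryestimate} would only give $\kappa(S_\Phi)^{\pm2}$, which is weaker than the claimed $\kappa(S_\Phi)^{\pm1}$; so the dual estimate must be proved by a sharper, direct computation rather than by blind substitution. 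The sharpening comes from observing that $\|S_\Phi^{-1}\varphi_i\|^2 = \langle S_\Phi^{-2}\varphi_i,\varphi_i\rangle$, and that for $x\in\SSn$, $\cR_{\widetilde\Phi}(x) = \sum_i |\langle S_\Phi^{-1}x,\varphi_i\rangle|^2/\|S_\Phi^{-1}\varphi_i\|^2$; one then bounds $\|S_\Phi^{-1}\varphi_i\|^2$ between $\lambda_{\max}(S_\Phi)^{-2}\|\varphi_i\|^2$ and $\lambda_{\min}(S_\Phi)^{-2}\|\varphi_i\|^2$, giving $\cR_{\widetilde\Phi}(x)$ between $\lambda_{\min}^2\,\mathcal Q_\Phi(S_\Phi^{-1}x)$ and $\lambda_{\max}^2\,\mathcal Q_\Phi(S_\Phi^{-1}x)$, and since $S_\Phi^{-1}$ moves the sphere into the shell between radii $\lambda_{\max}^{-1}$ and $\lambda_{\min}^{-1}$, one more application of homogeneity yields the factor $\kappa(S_\Phi)$ rather than $\kappa(S_\Phi)^2$. \textbf{The main obstacle} I anticipate is precisely this bookkeeping: keeping track of when a factor of $\kappa(T)$ is genuinely needed versus when it is an artifact of passing through an intermediate quadratic form, so that the general bound comes out as $\kappa(T)^{\pm2}$ but the self-adjoint dual bound correctly tightens to $\kappa(S_\Phi)^{\pm1}$.
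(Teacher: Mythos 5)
Your argument for the general estimate \eqref{eq:unitaryestimate} is correct, and routing it through $\absip{T^*x}{\varphi_i}$ and the quadratic form $\mathcal Q_\Phi$ is the right way to organize it: one factor ($\norm{T^{-1}}^{2}$, resp.\ $\norm{T}^{-2}$) comes from bounding the denominators $\norm{T\varphi_i}^2$, and the other factor ($\norm{T}^{2}$, resp.\ $\norm{T^{-1}}^{-2}$) comes from bounding $\norm{T^*x}^2$; together they give $\kappa(T)^{\pm 2}$. (Your intermediate claim that the first step already sandwiches $\cR_{T(\Phi)}(x)$ between $\kappa(T)^{\mp 2}\sum_i\absip{T^*x}{\varphi_i}^2/\norm{\varphi_i}^2$ over-states that step --- each side of that first sandwich costs only one of the two norms, not the full condition number --- but your final bookkeeping is the correct one.) This is essentially the paper's computation, made more careful: the paper compresses it into a single displayed pointwise inequality of the form $\absip{x}{T\varphi_i}^2\le\norm{T}^2\absip{x}{\varphi_i}^2$, which is not literally valid term by term, whereas your version via $T^*x$ is.

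The \emph{in particular} part, however, has a genuine gap. You correctly observe that substituting $T=S_\Phi^{-1}$ only yields $\kappa(S_\Phi)^{\pm 2}$, but the ``sharper direct computation'' you sketch does not repair this: bounding $\norm{S_\Phi^{-1}\varphi_i}^2\ge\lambda_{\max}^{-2}\norm{\varphi_i}^2$ costs a factor $\lambda_{\max}^{2}$, and bounding $\mathcal Q_\Phi(S_\Phi^{-1}x)\le\cR^+_\Phi\norm{S_\Phi^{-1}x}^2\le\cR^+_\Phi\lambda_{\min}^{-2}$ costs $\lambda_{\min}^{-2}$, so the product is again $(\lambda_{\max}/\lambda_{\min})^2=\kappa(S_\Phi)^2$; homogeneity of $\mathcal Q_\Phi$ cannot recover a factor of $\kappa(S_\Phi)$ here. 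The paper obtains the exponent $1$ by a different and much shorter route: it writes $\widetilde\Phi$ as the image of $\Phi$ under $T=S_\Phi^{-1/2}$ and applies \eqref{eq:unitaryestimate} once, using $\|S_\Phi^{\pm 1/2}\|=\|S_\Phi^{\pm 1}\|^{1/2}$ so that $\kappa(S_\Phi^{-1/2})^2=\kappa(S_\Phi)$. Note that this implicitly identifies $\widetilde\Phi$ with $S_\Phi^{-1/2}\Phi$ (the canonical Parseval frame) rather than with $(S_\Phi^{-1}\varphi_i)_{i=1}^N$ as defined in the introduction; for the literal canonical dual your observation stands, the general estimate gives only $\kappa(S_\Phi)^{\pm2}$, and your proposed tightening does not close that gap. (For unit-norm $\Phi$ one can salvage the exponent $1$ for $S_\Phi^{-1}\Phi$ via the identity $\mathcal Q_\Phi(S_\Phi^{-1}x)=\ip{S_\Phi^{-1}x}{x}\le\lambda_{\min}^{-1}$, since then $\tilde S_\Phi=S_\Phi$; but one cannot reduce to unit norm by scaling invariance, because rescaling $\Phi$ changes $S_\Phi$ and hence changes the dual non-trivially.)
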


\begin{proof}
For each $x \in \SSn$, assuming without loss of generality that $\varphi_i \neq 0$ for all $i$,
\[
\cR_{T(\Phi)}(x) = \sum_{i=1}^N \| P_{\langle T(\varphi_i) \rangle}(x)\|^2
\le \sum_{i=1}^N \frac{\norm{T}^2 \absip{x}{\varphi_i}^2}{\norm{\norm{T^{-1}}^{-1}\varphi_i}^2}
= \norm{T}^{2} \norm{T^{-1}}^{2} \cR_\Phi(x).
\]
Now maximizing or minimizing over $x \in \mathbb S$
implies $\cR^\pm_{T(\Phi)} \le \norm{T}^{2} \norm{T^{-1}}^{2}\cR^\pm_\Phi$. The lower bound
$\norm{T}^{-2} \norm{T^{-1}}^{-2}\cR^\pm_\Phi \le \cR^\pm_{T(\Phi)}$
follows similarly.

The {\em in particular}-part follows by recalling that $\widetilde{\Phi} = S_\Phi^{-\frac12} \Phi$,
by the identity $\|S_\Phi^{\pm 1 /2}\|=\|S_\Phi^{\pm 1}\|^{1/2}$, and by
applying \eqref{eq:unitaryestimate}.
\end{proof}



%

\section{Proof of Theorem~\ref{theo:desiderata}}
\label{sec:proof}

\noindent[D\ref{DEE}] If $\Phi = (\varphi_i)_{i=1}^N$ is an equal-norm
tight frame for an $n$-dimensional real or complex Hilbert space $\cHn$, then
\[
\cR_\Phi(x) = \sum_{i=1}^N \| P_{\langle \varphi_i \rangle}(x)\|^2
= \norm{\varphi_1}^{-2} \sum_{i=1}^N \absip{x}{\varphi_i}^2 = \norm{\varphi_1}^{-2}
= \frac{N}{n}.
\]\\[1ex]
[D\ref{DNyquist}] (i) $\Leftrightarrow$ (ii). This follows immediately from Lemma \ref{lemma:equniformred}.\\
(i') $\Rightarrow$ (ii'). Towards a contradiction, assume that
$\Phi$ is not orthogonal. Without loss of generality, $\varphi_1 \not\perp \langle \varphi_2, \ldots, \varphi_N\rangle$,
in particular, $\varphi_1 \neq 0$.
Hence, choosing $x= \varphi_1/\norm{\varphi_1}$, we obtain
\[
\cR_\Phi(x) = 1 + \sum_{i=2}^N \norm{\varphi_1}^{-2} \| P_{\langle \varphi_i \rangle}(x)\|^2 > 1.
\]
Thus $\cR^+_\Phi > 1$, a contradiction to (i').\\
(ii') $\Rightarrow$ (i'). Let $x \in \SSn$. Since $(\varphi_i/\norm{\varphi_i})_{i=1}^N$
is an orthonormal basis, we obtain
\[
\cR_\Phi(x) = \sum_{i=1}^N \| P_{\langle \varphi_i \rangle}(x)\|^2
= \norm{x}^2 = 1.
\]
This implies
\[
\cR^-_\Phi = \max_{x \in \SSn}\cR_\Phi(x) = 1 = \min_{x \in \SSn}\cR_\Phi(x) = \cR^+_\Phi.
\]\\[1ex]
[D\ref{Duplow}] By definition, $\cR^- \le \cR^+$. Moreover, since $\Phi = (\varphi_i)_{i=1}^N$
is a frame for $\cHn$, also $(\varphi_i/\norm{\varphi_i})_{i=1}^N$ is spanning $\cHn$, where
without loss of generality we assume that $\varphi_i \neq 0$ for all $i$. Hence
$(\varphi_i/\norm{\varphi_i})_{i=1}^N$ forms a frame for $\cHn$, and thus possesses a positive
lower frame bound, i.e.,
\[
\cR^-_\Phi = \inf_{x \in \SSn} \sum_{i=1}^N \| P_{\langle \varphi_i \rangle}(x)\|^2 > 0,
\]
as well as a finite upper frame bound, i.e.,
\[
\cR^+_\Phi = \sup_{x \in \SSn} \sum_{i=1}^N \| P_{\langle \varphi_i \rangle}(x)\|^2 < \infty.
\]\\[1ex]
[D\ref{Dadditiv}] The first claim follows from the basic observation that
\[
\sum_{i=1}^N \| P_{\langle \varphi_i \rangle}(x)\|^2 + \sum_{i=1}^n \| P_{\langle e_i \rangle}(x)\|^2
= \sum_{i=1}^N \| P_{\langle \varphi_i \rangle}(x)\|^2 + 1,
\]
which implies
\[
\cR^\pm_{\Phi\cup(e_i)_{i=1}^n} = \cR^\pm_\Phi + 1.
\]
Next, let $\Phi' = (\varphi_i')_{i=1}^M$. Then, for each $x \in \SSn$, we have
\[
\cR^-_{\Phi\cup\Phi'}(x)
= \sum_{i=1}^N \| P_{\langle \varphi_i \rangle}(x)\|^2 + \sum_{i=1}^M \| P_{\langle \varphi_i' \rangle}(x)\|^2.
\]
Hence
\[
\cR^-_{\Phi\cup\Phi'} = \min_{x \in \SSn} \cR^-_{\Phi\cup\Phi'}(x)
\ge \min_{x \in \SSn} \cR^-_\Phi(x) + \min_{x \in \SSn} \cR^-_{\Phi'}(x) = \cR^-_\Phi + \cR^-_{\Phi'}.
\]
as well as
\[
\cR^+_{\Phi\cup\Phi'} = \max_{x \in \SSn} \cR^+_{\Phi\cup\Phi'}(x)
\le \max_{x \in \SSn} \cR^+_\Phi(x) + \max_{x \in \SSn} \cR^+_{\Phi'}(x) = \cR^+_\Phi + \cR^+_{\Phi'}.
\]
The {\em in particular}-part follows immediately from here.\\[1ex]
[D\ref{DInvariance}] For each $x \in \SSn$,
\[
\cR_{U(\Phi)}(x) = \sum_{i=1}^N \| P_{\langle U(\varphi_i) \rangle}(x)\|^2 = \cR_\Phi(U^*(x)).
\]
Since $\norm{U^*(x)}=1$, we conclude that $\cR^\pm_{U(\Phi)} = \cR^\pm_\Phi$.\\
Invariance under scaling and under permutation of the frame vectors is immediate from
the definition of upper and lower redundancies.\\[1ex]
[D\ref{Daverob}]
Without loss of generality, we can assume that each frame element $\varphi_i$ is non-zero.
Since $\cR^-_\Phi$ is the lower frame bound of the frame $(\varphi_i/\|\varphi_i\|)_{i=1}^N$,
it follows from \cite{BCPS09} that $(\varphi_i/\|\varphi_i\|)_{i=1}^N$ can be partitioned into $\lfloor\cR^-_\Phi\rfloor$
spanning sets. Hence, $(\varphi_i)_{i=1}^n$ can also be partitioned into $\lfloor\cR^-_\Phi\rfloor$ spanning sets.\\
The {\em in particular}-part follows automatically from here.\\[1ex]
[D\ref{Dmaxrob}]
Let $S$ be the frame operator of the frame $(\varphi_i/\|\varphi_i\|)_{i=1}^N$,
where we  assume that $\varphi_i \neq 0$ for all $i$.  Then
$(S^{-1/2}(\varphi_i/\|\varphi_i\|))_{i=1}^N$ is a Parseval frame and
\[
\frac{1}{\cR^+}\Id_{\cHn} \le S^{-1} \le \frac{1}{\cR^-}\Id_{\cHn}.
\]
Hence, for all $i=1,2,\ldots,N$,
\beq \label{eq:helpRadoHorn}
\left\|S^{-1/2}\left(\varphi_i/\|\varphi_i\|\right)\right\|^2 \ge \frac{1}{\cR^+}.
\eeq
Next we check the Rado-Horn condition (see \cite{CKS06}).  For this, let $I\subset \{1,2,\ldots,N\}$,
and let $P$ be the orthogonal projection of $(S^{-1/2}(\varphi_i/\|\varphi_i\|))_{i=1}^N$
onto span $(S^{-1/2}(\varphi_i/\|\varphi_i\|))_{i \in I}$.  Employing the fact that $(S^{-1/2}(\varphi_i/\|\varphi_i\|))_{i=1}^N$
is Parseval as well as the estimate \eqref{eq:helpRadoHorn}, we obtain
\begin{eqnarray*}
\dim \langle S^{-1/2}\left(\varphi_i/\|\varphi_i\|\right) : i\in I \rangle
&=& \sum_{i=1}^N \left\|P(S^{-1/2}\left(\varphi_i/\|\varphi_i\|\right))\right\|^2\\
&\ge& \sum_{i\in I}\left\|P(S^{-1/2}\left(\varphi_i/\|\varphi_i\|\right))\right\|^2\\
&=&  \sum_{i\in I}\left\|S^{-1/2}\left(\varphi_i/\|\varphi_i\|\right)\right\|^2\\
&\ge& \frac{|I|}{\cR^+}.
\end{eqnarray*}
Summarizing,
\beq \label{eq:helpRadoHorn2}
\frac{|I|}{\dim \langle S^{-1/2}(\varphi_i/\|\varphi_i\|) : i\in I\rangle} \le \cR^+.
\eeq
By the Rado-Horn theorem \cite{CKS06}, condition \eqref{eq:helpRadoHorn2} implies that
$(S^{-1/2}(\varphi_i/\|\varphi_i\|))_{i=1}^N$ can be partitioned into $\lceil\cR^+\rceil$ linearly
independent sets.  Since $S^{-1/2}$ is an invertible operator, it follows that
$(\varphi_i/\|\varphi_i\|)_{i=1}^N$ -- and hence also $(\varphi_i)_{i=1}^N$ -- can be
partitioned into $\lceil\cR^+\rceil$ linearly independent sets.\\

\if 0
\section{Interesting questions to pursue}

\bitem
\item Can we make Theorem \ref{theo:equivalenceR} constructive?
\item {\bf Question related to Rado-Horn:}
Let $\Phi = \{\varphi_i\}_{i=1}^N$ be a frame in $\cHn$, and let $x_0 \in \cHn$. Give an equivalent
`Rado-Horn-like' condition for the following condition:
\bitem
\item There exist maximally $M$ disjoint (can be chosen linearly independent) sets $I_j \subset \{1,\ldots,N\}$, $j=1,\ldots,M$
such that $x_0 \in \spann\{\varphi_i : i \in I_j\}$ for all $j$.
\eitem
We can extend this, so that we have a `ladder' towards Rado-Horn: Let $\cA \subset \cHn$ be a subspace.
Give an equivalent `Rado-Horn-like' condition for the following condition:
\bitem
\item There exist maximally $M$ disjoint (can be chosen linearly independent) sets $I_j \subset \{1,\ldots,N\}$, $j=1,\ldots,M$
such that $\cA \subseteq \spann\{\varphi_i : i \in I_j\}$ for all $j$.
\eitem
Notice that this includes Rado-Horn if $\cA$ is chosen to be $\cH$.
\item
Given a frame, compute the scalings of the frame vectors, which give the closest
frame to being tight?
\item Show, using Vivek's result, that a random frame is almost uniformly redundant.
\eitem
\fi


\end{document}